\numberwithin{equation}{section}
\newtheorem{proposition}{Proposition}[section]
\newtheorem{claim*}{Claim}[section]
\newtheorem{lemma}{Lemma}[section]
\newtheorem{theorem}{Theorem}[section]
\newtheorem{corollary}{Corollary}[section]
\newtheorem{remark}{Remark}
\newcommand\numberthis{\addtocounter{equation}{1}\tag{\theequation}}
\DeclareMathOperator{\E}{\mathbb{E}}
\DeclareMathOperator{\Q}{\mathcal{Q}}
\DeclareMathOperator{\Pb}{\mathbb{Pb}}
\DeclareMathOperator{\VAR}{Var}
\DeclareMathOperator{\COV}{Cov}
\DeclarePairedDelimiter{\bigfloor}{\Big\lfloor}{\Big\rfloor}
\newcommand{\vb}{\vspace{3mm}}
\begin{document}

\title{Occupation times of alternating renewal processes \\ with L\'evy applications}
\author{N. J. Starreveld, R. Bekker and M. Mandjes}
\maketitle

\begin{quotation}{\small

\noindent
{\bf Abstract}\:\: 
This paper presents a set of results relating to the occupation time $\alpha(t)$ of a process $X(\cdot)$. The first set of results concerns exact characterizations of $\alpha(t)$, e.g., in terms of its transform up to an exponentially distributed epoch. In addition we establish a central limit theorem (entailing that a centered and normalized version of $\alpha(t)/t$ converges to a zero-mean Normal random variable as $t\to\infty$) and the tail asymptotics of ${\mathbb P}(\alpha(t)/t\ge q)$. We apply our findings to spectrally positive L\'evy processes reflected at the infimum and establish various new occupation time results for the corresponding model.

\vb 

\noindent
{\bf Keywords}\:\: Occupation time $\circ$ alternating renewal process $\circ$ L\'evy process $\circ$ reflected Brownian motion $\circ$ central limit theorem $\circ$ large deviations

\vb

\noindent
{\bf Affiliations}\:\: 
{N. Starreveld} is with Korteweg-de Vries Institute for Mathematics, Science Park 904,
1098 XH Amsterdam, University of Amsterdam, the Netherlands. Email: {N.J.Starreveld@uva.nl.}

\noindent
{R. Bekker} is with Department of Mathematics, Vrije Universiteit Amsterdam, De Boelelaan 1081a, 1081
HV Amsterdam, the Netherlands. Email: {r.bekker@vu.nl.}

\noindent
 {M. Mandjes} is with Korteweg-de Vries Institute for Mathematics, University of Amsterdam, Science
Park 904, 1098 XH Amsterdam, the Netherlands. He is also affiliated with E{\sc urandom}, Eindhoven
University of Technology, Eindhoven, the Netherlands, and CWI, Amsterdam the Netherlands.
Email: {m.r.h.mandjes@uva.nl.}

}
\end{quotation}

\section{Introduction}
In this paper we consider a stochastic process $X(\cdot)\equiv\{X(t): t\geq0\}$ taking values on the state space $E$, and a partition of the state space $E= A\cup B$ into two (disjoint) sets $A,B$. Specifically, $X(\cdot)$ is an alternating renewal process where the sojourn times in set $B$ depend on the sojourn times in $A$. 
The object of study is the {\it occupation time}, denoted by $\alpha(t)$, of the set $A$ up to time $t$, defined by
\begin{equation}
\label{occupation time definition}
\alpha(t) = \int_0^t 1_{\{X(s)\in A\}}{\rm d}s;
\end{equation}  
as the set $A$ is held fixed, we suppress it in our notation.
Such occupation measures appear naturally when studying stochastic processes, and are useful in the context of a wide variety of applications. Our primary source of motivation stems from the study of occupation times of (reflected) spectrally positive L\'evy processes, where such an alternating renewal structure appears naturally.

\paragraph{Scope \& contributions} Our results essentially cover two regimes. In the first place we present results characterizing the transient behavior of $\alpha(t)$, in terms of expressions for the transform of $\alpha(e_q)$, with $e_q$ being exponentially distributed with mean $q^{-1}$.  Secondly, the probabilistic properties of $\alpha(t)$ for $t$ large are captured by a central limit theorem and large deviations asymptotics. We also include a series of new results in which we specialize to the situation that $X(\cdot)$ corresponds to a spectrally positive L\'evy process reflected at its infimum; for instance, we determine an explicit expression of the double transform of the occupation time $\alpha(t)$. For the case of an unreflected process, we recover a distributional relation between the occupation time of the negative half line up to an exponentially distributed amount of time, and the epoch at which the supremum is attained (over the same time interval). \textcolor{black}{Our results have been recently used to study processes with two reflecting barriers \cite{Sta}.}

\paragraph{Relation to existing literature}
The occupation time of a stochastic process was first considered  in \cite{Tak}, resulting in an expression for the distribution function of $\alpha(t)$ that enabled the derivation of a central limit theorem; a similar result was also established in \cite{Zac} using renewal theory. In \textcolor{black}{\cite{Cai, Lan, Loe}} occupation times of spectrally negative L\'evy processes were studied, while in \cite{Kyp Ref} refracted L\'evy processes were dealt with; these results are typically occupation times until a first passage time. \textcolor{black}{Occupation times up to a fixed time horizon $t$ have been studied in \cite{Gue} for spectrally negative L\'evy processes and in \cite{Wu} for a general L\'evy process which is not a compound Poisson process.} The cases in which $X(\cdot)$ is a Brownian motion, or Markov-modulated Brownian motion have been extensively studied; see e.g.\ \cite{Bil, Bre,Das, Pec} and references therein. A variety of results specifically applying to Brownian motion and reflected standard normal Brownian motion can be found in \cite{Salm}. Occupation times of dam processes were considered in \cite{Cohen}. Occupation times and its application to service levels over a finite interval in multi-server queues were dealt with in \cite{Baron, Roubos}. Applications in machine maintenance and telegraph processes can be found in \cite{Tak, Zac}.

This paper generalizes the results established in \cite{Cohen, Tak, Zac}. The setup of \cite{Tak, Zac}  assumes that the successive time intervals that $X(\cdot)$ spends in the two sets $A$ and $B$ form two independent sequences of i.i.d.\ random variables, whereas we relax this assumption; the motivation for pursuing  this extension lies in the fact that in many models  we observe dependency between such intervals.   
Our methodology also generalizes the methods of \cite{Cohen}: there, relying on renewal theory, specific mean quantities are found, whereas we uniquely characterize the corresponding full distributions. \textcolor{black}{ The results we prove can be applied to a broad class of processes including spectrally one-sided L\'evy processes with or without reflecting barriers.} \textcolor{black}{The large deviations result we prove is an additional novelty of this paper, to the authors knowledge large deviations asymptotics for occupation times have not been studied before in the literature.}

\paragraph{Organization} 
The structure of the paper is as follows. Section \ref{sec: model}  describes the model and presents the application that motivated our research, i.e., storage models and spectrally positive L\'evy processes reflected at its infimum. In Section \ref{sec: Overview of analysis and results} we present our main results and show how our findings can be applied to reflected spectrally positive L\'evy processes. Then, in Section \ref{proof of LST}, we derive the expression for the double transform of the occupation time. Section \ref{Proof of CLT} provides the proof of the central limit theorem, and Section \ref{Proof of LDP} of the large deviations result. The more technical proofs are included in an appendix.

\section{Model and Applications}
\label{sec: model}
First we provide a general model description in Section \ref{sec:Model description}. Then we introduce the more specific class of reflected spectrally positive L\'evy processes in Section \ref{sec: Applications 1}, where we distinguish two cases, viz.\ storage models and general reflected spectrally positive L\'evy processes. 

\subsection{Model description}
\label{sec:Model description}
We consider a stochastic process $X(\cdot)\equiv \{X(t): t\geq0\}$ taking values on the state space $E$, and a partition of $E$ into two disjoint subsets, denoted  by $A$ and $B$, i.e., $E=A\cup B$ and $A\cap B=\emptyset$. Then, $X(\cdot)$ alternates between $A$ and $B$. 
The successive sojourn times in $A$ are $(D_i)_{i\in{\mathbb N}}$, and those in $B$ are $(U_i)_{i\in{\mathbb N}}$.  If 
$(D_i)_{i\in{\mathbb N}}$ and $(U_i)_{i\in{\mathbb N}}$ are independent sequences of i.i.d.\ random variables, the resulting process is  an {\em alternating renewal process}, and has been considered in e.g.\ \cite{Tak, Zac}. In this paper, however, we consider the more general situation in which $(D_i, U_i)_{i\in{\mathbb N}}$ is a sequence of i.i.d.\ bivariate random vectors  that are distributed according to the generic random vectors $(D,U)$, but without requiring $D$ and $U$ to be independent. In the paper we prove our results for the case $X(0)\in A$, but the case $X(0)\in B$ can be treated along the same lines; also the case that $D_1$ has a different distribution can be dealt with, albeit at the expense of more complicated expressions.

\subsection{Spectrally positive L\'evy processes}
\label{sec: Applications 1}

A stochastic process $X(\cdot)$ defined on a probability space $(\Omega,\mathcal{F},\Pb)$ is called a L\'evy process if $X(0)=0$, it has almost surely c\`adl\`ag paths, and it has stationary and independent increments. Typical examples of L\'evy processes are Brownian motion and the (compound) Poisson process.  The L\'evy-Khinchine representation relates L\'evy processes with {\em infinitely divisible distributions} and it provides the following representation for the characteristic exponent $\Psi(\theta):=-\log\E(e^{{\rm i}\theta X(1)})$:
\[\Psi(\theta) = {\rm i}d\theta+\frac{1}{2}\sigma^2\theta^2+\int_{\mathbb{R}}(1-e^{{\rm i}\theta x}+{\rm i}\theta x1_{\{|x|<1\}})\Pi({\rm d}x),\] 
where $d\in \mathbb{R}$, $\sigma^2>0$ and $\Pi(\cdot)$ is a measure concentrated on $\mathbb{R}\backslash\{0\}$ satisfying $\int_{\mathbb{R}}(1\wedge x^2)\Pi({\rm d}x)<\infty$. We refer to \cite{Ber,Kyp} for an overview of the theory of L\'evy processes. When the measure $\Pi(\cdot)$ is concentrated on the positive real line then $X(\cdot)$ exhibits jumps only in the upward direction and we talk about a {\em spectrally positive} L\'evy process.  For a spectrally positive L\'evy process $X(\cdot)$ with a negative drift, i.e. $\E X(1)<0$, the Laplace exponent $\phi(\alpha):=\log \E e^{-\alpha X(1)}$
is a well defined, finite, increasing and convex function for all $\alpha\geq0$, so that the inverse function $\psi(\cdot)$ is also well defined; if it does not have a negative drift, we have to work with the right-inverse.

Given a L\'evy process $X(\cdot)$ we define the process $Q(\cdot)$, commonly referred to
as $X(\cdot)$ {\it reflected at its infimum} \cite{Ber,Kyp}, by 
$
Q(t) = X(t) + L(t),$ where $L(t)$ is the regulator process (or {\em local time at the infimum}) which ensures that $Q(t)\geq0$ for all $t\geq0$. Hence the process $L(\cdot)$ can increase at time $t$  only when $Q(t)=0$, that is,
$\int_0^T Q(t) {\rm d}L(t) = 0$ for all $T>0.$
 This leads to a Skorokhod problem with the following solution: with $Q(0)=w$,
\[L(t) = \max\left\{w,\sup_{0\leq s\leq t}-X(s)\right\},\:\:\:\:Q(t) = X(t) +\max\left\{w, \sup_{0\leq s\leq t}-X(s)\right\}.\]
For the process $Q(t)$ and a given level $\tau\geq0$, the occupation time of the set $[0,\tau]$
 is defined by
\begin{equation}
\label{occupation time storage model}
\alpha(t) = \int_0^t 1_{\{Q(s)\leq \tau\}}{\rm d}s.
\end{equation}

\paragraph{Storage models} Storage models are used to model a reservoir that is facing supply (input) and demand (output). Supply and demand are either described by sequences of random variables $(\eta_i)_{i\in{\mathbb N}}$ and $(\xi_i)_{i\in{\mathbb N}}$, or by an input process $A(\cdot)$ and an output process $B(\cdot)$. Storage models can be used to control the level of stored material by regulating  supply and demand. Early applications \cite{Mor} of such models concern finite dams which are constructed for storage of water; see \cite{Gani1} for additional applications of storage models and
\cite{Prabhu1963, King,Mor} for a historic account of the exact mathematical formulation of the content process. We refer to \cite[Ch. IV]{Pra} and \cite[Ch. IV]{Kyp} for an overview of dams and general storage models in continuous time. 

A general storage model consists of a (cumulative) {\em arrival process} $A(\cdot)$  and a (cumulative) {\em output process} $B(\cdot)$, leading to the {\em net input} process $V(\cdot)$ defined by $V(t) = A(t) - B(t)$; the work stored in the system at time $t$, denoted by $Q(t)$, is defined by applying the reflection mapping, as defined above, to $V(\cdot)$.
First we look into the case that $B(\cdot)$ corresponds to a positive linear trend, whereas $A(\cdot)$ is a pure jump subordinator.  Thus, $V(t) = X(t)+ w$ where $X(t) = A(t) - rt$ and $w= V(0)$ is the initial amount of work stored in the system. By construction, $X(\cdot)$ is a spectrally positive L\'evy process; without loss of generality, we assume that $r=1$. 

The reflected  process $Q(\cdot)$  has an interesting path structure: it has a.s.\ paths of bounded variation and has only jumps in the upward direction, such that upcrossings of a level occur with a jump whereas downcrossings of a level occur with equality. Moreover, from the L\'evy-Khinchine representation we have that if the L\'evy measure $\Pi(\cdot)$ satisfies $\Pi((0,\infty))=\infty$, then $X(\cdot)$ exhibits countably infinite jumps in every finite interval of time.
For the analysis of the occupation time $\alpha(t)$ we observe that the process $Q(\cdot)$ alternates between the two sets $A=[0,\tau]$ and $B=(\tau,\infty)$. We define the following first passage times, for $\tau\geq0$,
\begin{equation}
\tau_{\alpha} = \inf_{t>0}\{t: Q(t) > \tau\,|\, Q(0) = \tau\},
\:\:\:
\label{upcrossing}
\tau_{\beta} = \inf_{t>\tau_{\alpha}}\{t:Q(t)= \tau\,|\, Q(0) = \tau\}.
\end{equation}   
Observe that $Q(\tau_{\alpha})>\tau$.
In case $\E A(1) <r$ the process keeps on having downcrossings of level $\tau$. Call the sequence of successive downcrossings  $(T_i)_{i\in{\mathbb N}}$. 
As shown in \cite[Thms.\ 1 and 2]{Cohen} relying on the bounded variation property of the paths, $(T_i)_{i\in{\mathbb N}}$ is a renewal process, and hence $(D_i)_{i\in{\mathbb N}}$ and $(U_i)_{i\in{\mathbb N}}$ are sequences of well defined random variables. In addition, $(D_{i+1},U_{i+1})$  is independent of $(D_1,U_1,D_2,U_2,\ldots,D_i,U_i)$; at the same time the {\em overshoot} (over level $\tau$) makes $D_i$ and $U_i$ dependent. In Fig.\ \ref{Figure Path} an illustrative realization of $Q(\cdot)$ is depicted for the case of finitely many jumps in a bounded time interval.

\begin{figure}[H]
\includegraphics[scale=0.75]{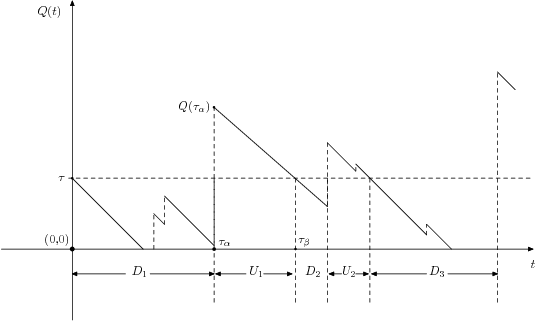}
\caption{Sample path for the case of finitely many jumps in a bounded interval of time.}
\label{Figure Path}
\end{figure}

\paragraph{General spectrally positive L\'evy processes} When the process $X(\cdot)$ is an arbitrary spectrally positive L\'evy process (i.e., we deviate from the setting of $A(\cdot)$ being  a pure jump process and $B(\cdot)$ a positive linear trend),  then $Q(\cdot)$ may have a more complicated path structure. We may have paths of unbounded variation, and as a result the intervals $D_i$ and $U_i$ are not necessarily well defined. But even in this case it is still possible, as will be shown in Section \ref{sec: Applications}, to study the occupation time of $[0,\tau]$ for the reflected process $Q(\cdot)$, and also the occupation time of $(-\infty, 0)$ for the free process $X(\cdot)$. This is done relying on an approximation procedure: we first approximate a general spectrally positive L\'evy process by a process with paths of bounded variation, and then use a continuity argument to show that the results for the bounded variation case carry over to the general spectrally positive case. 

In line with Section~\ref{sec:Model description}, we prove the results for the case the process starts at $\tau$. Having a different initial position requires a different distribution of $(D_1, U_1)$. This case can be treated along the same lines leading to more extensive notation and expressions and is therefore omitted.

\section{Overview of results}
\label{sec: Overview of analysis and results}

In this section we present the main results of the paper. 
As pointed out in the introduction, they cover two regimes. 
\begin{itemize}
\item[$\circ$]
In the first place we present results characterizing the transient behavior of $\alpha(t)$. We identify an expression for  the distribution function of $\alpha(t)$; the proof resembles that was developed in  \cite[Thm.~1]{Tak} for the alternating renewal case. The expressions for the transform of $\alpha(e_q)$, with $e_q$ exponentially distributed with mean $q^{-1}$, are found by arguments similar to those used in \cite{blom}, but taking into account the dependence between $D$ and $U$. The resulting double transform is a new result, which also covers the alternating renewal case with  independent $D$ and $U$, as was considered in \cite{Tak}, and is in terms of the joint transform of $D$ and $U$. In Section~\ref{sec: Applications} we specify this joint transform for the models of Section~\ref{sec: Applications 1}. 

\item[$\circ$] For the asymptotic behavior of $\alpha(t)$ we prove a central limit theorem and large deviations asymptotic. The central limit result generalizes \cite[Theorem 2]{Tak},
that covers the alternating renewal case. The result features the quantity $c={\rm Cov}(D,U)$; in Section \ref{sec: Applications} we identify closed-form expressions for $c$ for the models described in Section \ref{sec: Applications 1}. The large deviations principle is proven by using the G\"artner-Ellis theorem  \cite{dem}. 
\end{itemize}

\subsection{Distribution of $\alpha(t)$}
\label{subsection distribution}

We assume $(D_i,U_i)_{i\in{\mathbb N}}$ to be sequences of i.i.d.\ random vectors, distributed as the generic random vector $(D, U)$; $D$ and $U$ have distribution functions $F(\cdot)$ and $G(\cdot)$, respectively. We define $X_n = D_1+\hdots+D_n$ and $Y_n = U_1+\hdots+U_n$. In addition to $\alpha(t)$, we define the occupation time of $B$ by
\begin{equation}
\label{occupation time}
\beta(t) = \int_0^t 1_{\{X(s)\in B\}}{\rm d}s = t-\alpha(t).
\end{equation}
The proof of the following result is analogous to that of \cite[Thm.\ 1]{Tak}.

\begin{proposition}
\label{proposition distribution <tau}
For $X(0) \in A$ and $0\leq x<t$,
\begin{eqnarray}
\label{distribution b(t)}
\Pb(\alpha(t)<x) &=& F(x)-\sum_{n=1}^\infty\big[\Pb(Y_n\leq t-x,X_{n}<x)-\Pb(Y_n\leq t-x, X_{n+1}<x)\big],
\\
\label{distribution a(t)}
\Pb(\beta(t)\leq x) &=& \sum_{n=0}^\infty\big[\Pb(Y_n\leq x,X_{n}<t-x)-\Pb(Y_n\leq x, X_{n+1}<t-x)\big].
\end{eqnarray}
\end{proposition}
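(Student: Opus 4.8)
The plan is to read off the law of the occupation time from a frozen-clock first-passage representation, in the spirit of \cite[Thm.~1]{Tak}. First I would record the time geometry of the alternating process under $X(0)\in A$: the $(n+1)$-th $A$-sojourn occupies the real-time interval $[X_n+Y_n,\,X_{n+1}+Y_n)$, on which $\alpha(\cdot)$ increases at unit rate from $X_n$ to $X_{n+1}$ while $\beta(\cdot)$ stays frozen at $Y_n$; dually, the $(n+1)$-th $B$-sojourn occupies $[X_{n+1}+Y_n,\,X_{n+1}+Y_{n+1})$, on which $\beta(\cdot)$ runs from $Y_n$ to $Y_{n+1}$ while $\alpha(\cdot)$ is frozen at $X_{n+1}$. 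Since $\alpha(t)=\int_0^t 1_{\{X(s)\in A\}}{\rm d}s$ is nondecreasing and Lipschitz, hence continuous, its generalized inverse is well behaved, and $X_n\uparrow\infty$ almost surely because the $A$-sojourns are strictly positive.

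Next I would introduce the first-passage time $\theta(a):=\inf\{s\ge0:\alpha(s)\ge a\}$ and invoke continuity of $\alpha$ to get the equivalences $\{\alpha(t)<x\}=\{t<\theta(x)\}$ and $\{\alpha(t)\ge t-x\}=\{t\ge\theta(t-x)\}$; the virtue of working with $\theta$ is that it removes any need to case-split on which sojourn contains the epoch $t$. The one real computation is the value of $\theta$ on the event that the $A$-level is attained inside the $(n+1)$-th $A$-sojourn: on $\{X_n<a\le X_{n+1}\}$ the $B$-clock is frozen at $Y_n$, so level $a$ is reached at real time $\theta(a)=a+Y_n$. Taking $a=x$ gives $\{\alpha(t)<x\}\cap\{X_n<x\le X_{n+1}\}=\{Y_n>t-x\}\cap\{X_n<x\le X_{n+1}\}$, and taking $a=t-x$ gives, via $\beta(t)=t-\alpha(t)$, that $\beta(t)\le x$ is equivalent to $Y_n\le x$ on $\{X_n<t-x\le X_{n+1}\}$. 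Because the crossing events $\{X_n<\cdot\le X_{n+1}\}$, $n\ge0$, are disjoint and (as $X_n\uparrow\infty$) of total probability one, countable additivity yields
\[
\Pb(\alpha(t)<x)=\sum_{n=0}^\infty\Pb\big(X_n<x\le X_{n+1},\,Y_n>t-x\big),\qquad \Pb(\beta(t)\le x)=\sum_{n=0}^\infty\Pb\big(X_n<t-x\le X_{n+1},\,Y_n\le x\big),
\]
with no interchange-of-limit issues.

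Finally I would massage these into the stated form. Writing $\{X_n<a\le X_{n+1}\}=\{X_n<a\}\setminus\{X_{n+1}<a\}$ turns each summand into a difference, and for $\alpha$ I would replace $\{Y_n>t-x\}$ by its complement, producing the quantities $\Pb(Y_n\le t-x,X_n<x)$ and $\Pb(Y_n\le t-x,X_{n+1}<x)$. Summing the first part gives $\sum_{n\ge0}\Pb(X_n<x\le X_{n+1})=1$; in the second part the $n=0$ term equals $\Pb(X_0<x\le X_1)=1-F(x)$ (here $X_0=Y_0=0$ because the process starts in $A$), and these cancel down to $F(x)$, while the $n\ge1$ terms reproduce the stated series $\sum_{n\ge1}[\Pb(Y_n\le t-x,X_n<x)-\Pb(Y_n\le t-x,X_{n+1}<x)]$. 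For $\beta$ no complement is needed, so the $n=0$ term stays inside the series and the sum from $n=0$ is obtained directly.

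The step I expect to demand the most care is pinning down strict versus non-strict inequalities at the sojourn boundaries, together with the treatment of atoms of $F$ and $G$: the equivalence $\{\alpha(t)<x\}=\{t<\theta(x)\}$ and the identity $\theta(a)=a+Y_n$ must be made precise exactly at the endpoints so that the final expressions carry $<$ in the $X_n,X_{n+1}$ events, $\le$ in the $Y_n$ events, and the isolated term $F(x)=\Pb(X_1<x)$ as written. Beyond this bookkeeping the argument is that of \cite[Thm.~1]{Tak}; the only new ingredient is that nowhere is independence of $D$ and $U$ used — only that $(D_i,U_i)_{i\in{\mathbb N}}$ are i.i.d.\ vectors — so the bivariate law of $(D,U)$ is carried through verbatim.
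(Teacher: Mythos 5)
Your proof is correct and is essentially the paper's argument: the paper simply invokes Tak\'acs's Theorem 1, whose proof is exactly this crossing decomposition — identify the sojourn $\{X_n < a \le X_{n+1}\}$ in which the occupation level $a$ is first attained, note that this happens at real time $a+Y_n$, and telescope — and your inverse-time formulation $\theta(a)=a+Y_n$ is just a clean packaging of that idea. Your closing observation, that independence of $D$ and $U$ is never used and only the i.i.d.\ structure of the pairs $(D_i,U_i)$ matters, is precisely the point that lets the paper carry the classical result over to its dependent setting.
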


\begin{remark}{\em  If $X(0)\in A$, then $\beta(t)$ has an atom at 0; the mass at 0 is given by $\Pb(\beta(t) = 0) = 1-F(t)$. Similarly, if $X(0)\in B$, then $\alpha(t)$ has an atom at 0; the mass at 0 equals $\Pb(\alpha(t)=0) = 1-G(t)$. }
\end{remark}
\begin{remark}{\em 
If $D_i$ and $U_i$ are independent random variables, then (\ref{distribution a(t)}) takes the simpler form
\[\Pb(\beta(t)\leq x) = \sum_{n=0}^\infty G^{(n)}(x)\big[ F^{(n)}(t-x)- F^{(n+1)}(t-x)\big],\]
where $F^{(n)}(\cdot)$ and $G^{(n)}(\cdot)$ are the $n$-fold convolutions of the distribution functions $F(\cdot)$ and $G(\cdot)$ with itself, respectively. }
\end{remark}

\subsection{Transform of $\alpha(t)$}
\label{subsection transform}

From Proposition \ref{proposition distribution <tau} we observe that the distribution function of $\alpha(t)$ is rather complicated to work with. In this section we therefore focus on
\begin{equation}
\label{transform of alpha(t)}
\int_0^\infty e^{-qt} \E e^{-\theta \alpha(t)}{\rm d}t=\frac{1}{q}\E e^{-\theta \alpha(e_q)},
\end{equation}
where $e_q$ is an exponentially distributed random variable with mean $q^{-1}$; using a numerical inversion algorithm \cite{Aba, Aba 2, Aba 3, Den}, one can then obtain an accurate approximation for the distribution function of $\alpha(t)$. The method we use to determine the transform in (\ref{transform of alpha(t)}) relies on the following line of reasoning: (i)~we start an exponential clock at time 0, (ii)~knowing that $X(0)\in A$,  we distinguish between  $X(\cdot)$  still being in $A$ at $e_q$, or   $X(\cdot)$ having left $A$ at $e_q$, (iii) in the latter case we distinguish between $X(\cdot)$ being still in $B$
at $e_q$, or  $X(\cdot)$ having left $B$. Using the memoryless property of the exponential distribution and the independence of $D_{i+1}$ and $U_i$ we can then sample the exponential distribution again, and this procedure continues until the exponential clock expires. Let 
\begin{equation}
\label{transforms 12}
L_1(\theta) = \E e^{-\theta D} \hspace{2mm} \text{and} \hspace{2mm} L_{1,2}(\theta_1,\theta_2) = \E e^{-\theta_1 D-\theta_2 U}.
\end{equation}
\textcolor{black}{The main result concerning the transform of the occupation time $\alpha(t)$ is given below.}
\begin{theorem}
\label{theorem on LST}
For the transform of the occupation time $\alpha(t)$ we have
\begin{equation}
\label{transform expression}
\int_0^\infty e^{-q t} \E e^{-\theta \alpha(t)}{\rm d}t = \frac{1}{1-L_{1,2}(q+\theta,q)}\left[\frac{1-L_1(q+\theta)}{q+\theta}+\frac{L_1(q+\theta)-L_{1,2}(q+\theta,q)}{q}\right].
\end{equation}
\end{theorem}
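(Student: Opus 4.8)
The plan is to exploit the identity (\ref{transform of alpha(t)}), which reduces the claim to computing the single quantity $\Phi:=\E e^{-\theta\alpha(e_q)}$ for an independent exponential clock $e_q$ of rate $q$; the sought transform is then simply $\Phi/q$. I would obtain $\Phi$ by a first-cycle/regeneration decomposition according to where $e_q$ lands relative to the first cycle, whose $A$-part has length $D_1$ and $B$-part has length $U_1$. Conditioning on the pair $(D_1,U_1)$, which is independent of $e_q$, and using that occupation of $A$ accrues at unit rate exactly on the $D$-intervals, three mutually exclusive events arise: on $\{e_q\le D_1\}$ one has $\alpha(e_q)=e_q$; on $\{D_1<e_q\le D_1+U_1\}$ one has $\alpha(e_q)=D_1$; and on $\{e_q>D_1+U_1\}$ the clock survives the first full cycle.

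The first two contributions are elementary. Integrating the exponential density over $[0,D_1]$ gives $\E\big[e^{-\theta e_q}\mathbf{1}_{\{e_q\le D_1\}}\big]=\tfrac{q}{q+\theta}\big(1-L_1(q+\theta)\big)$. For the second event, the conditional probability that $e_q$ falls in the first $B$-interval is $e^{-qD_1}(1-e^{-qU_1})$, so the contribution equals $\E\big[e^{-\theta D_1}e^{-qD_1}(1-e^{-qU_1})\big]=L_1(q+\theta)-L_{1,2}(q+\theta,q)$, directly by the definitions in (\ref{transforms 12}).

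For the third event I would invoke regeneration. Conditionally on $(D_1,U_1)$ and on $\{e_q>D_1+U_1\}$, memorylessness of the exponential makes the residual clock $e_q-(D_1+U_1)$ again exponential of rate $q$, and since $(D_i,U_i)_{i\ge 2}$ is an independent copy of the original sequence (in particular $D_2$ is independent of $U_1$), the post-cycle evolution is a fresh copy of the whole process started in $A$. On this event $\alpha(e_q)=D_1+\alpha'(e_q-(D_1+U_1))$ with $\alpha'$ an independent copy, so the contribution factorizes as $\E\big[e^{-(q+\theta)D_1-qU_1}\big]\cdot\Phi=L_{1,2}(q+\theta,q)\,\Phi$. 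Summing the three pieces yields the self-consistency relation
\[
\Phi=\frac{q}{q+\theta}\big(1-L_1(q+\theta)\big)+\big(L_1(q+\theta)-L_{1,2}(q+\theta,q)\big)+L_{1,2}(q+\theta,q)\,\Phi,
\]
and solving for $\Phi$ and dividing by $q$ gives exactly (\ref{transform expression}).

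The one genuinely delicate step is the regeneration in the third case: one must justify that, on the survival event, the residual exponential is independent of $(D_1,U_1)$ and that the post-cycle process is distributed as the original process and independent of the first cycle. This is precisely where the i.i.d.\ structure of the pairs $(D_i,U_i)$ and the independence of $D_{i+1}$ from $U_i$ enter, and it is what legitimizes replacing the tail of the process by a fresh copy carrying the same $\Phi$. The remaining steps are the routine Laplace-transform evaluations recorded above.
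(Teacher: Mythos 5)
Your proposal is correct and follows essentially the same route as the paper's own proof: the same three-event decomposition $\{e_q<D\}$, $\{D\le e_q<D+U\}$, $\{e_q\ge D+U\}$, the same evaluations yielding $\tfrac{q}{q+\theta}\bigl(1-L_1(q+\theta)\bigr)$, $L_1(q+\theta)-L_{1,2}(q+\theta,q)$, and $L_{1,2}(q+\theta,q)\,\E e^{-\theta\alpha(e_q)}$, and the same regeneration argument (memorylessness of $e_q$ plus independence of $D_{i+1}$ from $U_i$) leading to the self-consistency equation that is then solved and divided by $q$. The only cosmetic difference is that you condition on $(D_1,U_1)$ directly rather than writing out the double integrals, which is a harmless streamlining.
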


\begin{remark}{\em 
\label{remark LST}
Theorem \ref{theorem on LST} shows that the transform of the random variable $D$ and the joint transform of $D,U$ are needed in order to compute the transform of $\alpha(t)$. These quantities are model specific, i.e.,  we have to assume $X(\cdot)$ has some specific structure to be able to compute them. In Section  \ref{sec: Applications} we come back to this issue, and we compute these quantities for the models considered in Section \ref{sec: Applications 1}.}
\end{remark}
\paragraph{Availability function} Another performance measure that is used in many applications, particularly in the theory of machine maintenance scheduling and reliability, see e.g. \cite{Zac}, is the availability function $\Pb(X(\cdot) \in A\,|\,X(0)\in A)$, of which the transform is given in the following proposition.
\begin{proposition}
\label{corollary AF}
For $q\geq0$, with $L_1(\cdot)$ and $L_{1,2}(\cdot,\cdot)$  given in $(\ref{transforms 12})$,
\begin{eqnarray}
\label{transform AF}
\int_{0}^{\infty} e^{-qt} \Pb(X(t)\in A| X(0)\in A){\rm d}t &=&  \frac{1}{q}\frac{1-L_1(q)}{1-L_{1,2}(q,q)}
\\
\label{transform AF2}
\int_{0}^{\infty} e^{-qt} \Pb(X(t)\in B| X(0)\in A){\rm d}t &=&\frac{1}{q}\frac{L_1(q) - L_{1,2}(q,q)}{1-L_{1,2}(q,q)}.
\end{eqnarray}
\end{proposition}
\subsection{Central Limit Theorem}
\label{subsection CLT}

Where Proposition \ref{proposition distribution <tau} and Theorem \ref{theorem on LST} entail that the full joint distribution of $D$ and $U$ is essential when characterizing $\alpha(t)$, the next theorem shows that in the central limit regime only the covariance is needed. From \cite{Tak} we have that, for the case $D$ and $U$ are independent,  under appropriate scaling, $\alpha(t)$ and $\beta(t)$ are asymptotically normally distributed; our result shows that the asymptotic normality carries over to our model, with an adaptation of the parameters to account for the dependence between $D$ and $U$. We write 
$\alpha=\E D$, $\beta = \E U,$ $\sigma_{\alpha}^2 = \VAR D$, $ \sigma_{\beta}^2 =\VAR U$, and $c={\rm Cov}(D,U).$ We rule out the case of perfect correlation, i.e., we assume that $c<\sigma_\alpha \sigma_\beta$.

\begin{theorem}
\label{CLT theorem}  
Assuming $\sigma_{\alpha}^2 +\sigma_{\beta}^2<\infty$, with $\Phi(\cdot)$ the standard Normal distribution function,
\begin{equation}
\label{CLT 2}
\lim_{t\rightarrow \infty} \Pb\left( \frac{\alpha(t)-\frac{\alpha t}{\alpha+\beta}}{\sqrt{\frac{\beta^2\sigma_{\alpha}^2+\alpha^2\sigma_{\beta}^2-2\alpha\beta c}{(\alpha+\beta)^3}t}}< x\right)=\Phi(x)
\hspace{2mm} \text{and} \hspace{2mm}
\lim_{t\rightarrow \infty} \Pb\left( \frac{\beta(t)-\frac{\beta t}{\alpha+\beta}}{\sqrt{\frac{\beta^2\sigma_{\alpha}^2+\alpha^2\sigma_{\beta}^2-2\alpha\beta c}{(\alpha+\beta)^3}t}}\leq x\right)=\Phi(x) \end{equation}
\end{theorem}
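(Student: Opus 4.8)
The plan is to recast the occupation time as a renewal--reward sum and then invoke a central limit theorem for randomly indexed sums. Set $S_n = X_n + Y_n = \sum_{i=1}^n (D_i + U_i)$, so that $(S_n)$ is an ordinary renewal sequence whose increments $D_i + U_i$ have mean $\alpha + \beta$ and finite variance (the latter because $\sigma_{\alpha}^2 + \sigma_{\beta}^2 < \infty$ and $|c| \le \sigma_{\alpha}\sigma_{\beta}$). Let $N(t) = \sup\{n \ge 0 : S_n \le t\}$ be the number of completed $A$--$B$ cycles by time $t$. Tracking the alternating structure, at time $t$ the process has accumulated exactly $X_{N(t)}$ units of time in $A$ from the completed cycles, plus a partial contribution from the current cycle, so that
\begin{equation*}
\alpha(t) = X_{N(t)} + \min\big(t - S_{N(t)},\, D_{N(t)+1}\big),
\end{equation*}
where the boundary term lies in $[0, D_{N(t)+1}]$. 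First I would reduce the statement to a CLT for $X_{N(t)}$ by showing this boundary term is negligible on the scale $\sqrt{t}$.

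The key algebraic step is the decomposition
\begin{equation*}
X_{N(t)} - \frac{\alpha}{\alpha+\beta}\,t = \sum_{i=1}^{N(t)} W_i + \frac{\alpha}{\alpha+\beta}\big(S_{N(t)} - t\big), \qquad W_i := \frac{\beta D_i - \alpha U_i}{\alpha+\beta},
\end{equation*}
which isolates the fluctuations into a random walk driven by the i.i.d.\ variables $W_i$. A direct computation gives $\E W_i = (\beta\alpha - \alpha\beta)/(\alpha+\beta) = 0$ and
\begin{equation*}
\sigma_W^2 := \VAR(W_i) = \frac{\beta^2 \sigma_{\alpha}^2 + \alpha^2 \sigma_{\beta}^2 - 2\alpha\beta c}{(\alpha+\beta)^2};
\end{equation*}
note that the covariance $c$ enters precisely here, which is exactly the point at which the dependence between $D$ and $U$ (absent in \cite{Tak}) manifests itself. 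The residual term satisfies $|S_{N(t)} - t| \le D_{N(t)+1} + U_{N(t)+1}$, the length of the straddling cycle.

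Next I would dispose of the boundary contributions. Under the second--moment assumption the forward recurrence time and the full straddling interval of a renewal process converge in distribution (to the equilibrium and size--biased laws), hence both $\min(t-S_{N(t)}, D_{N(t)+1})$ and $S_{N(t)} - t$ are $O_{\mathrm P}(1)$ and therefore $o_{\mathrm P}(\sqrt{t})$. What remains is to establish
\begin{equation*}
\frac{1}{\sqrt{t}}\sum_{i=1}^{N(t)} W_i \;\Longrightarrow\; N\!\left(0, \frac{\sigma_W^2}{\alpha+\beta}\right).
\end{equation*}
Here I would apply Anscombe's random--index central limit theorem: since $N(t)/t \to 1/(\alpha+\beta)$ almost surely by the strong law for renewal processes, and the $W_i$ are i.i.d.\ with mean $0$ and variance $\sigma_W^2$, Anscombe's theorem yields $\big(\sum_{i=1}^{N(t)} W_i\big)/\sqrt{N(t)} \Rightarrow N(0,\sigma_W^2)$, and the scaling $N(t) \approx t/(\alpha+\beta)$ produces the displayed limit. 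Combining this with the negligibility of the boundary terms through Slutsky's lemma gives the CLT for $\alpha(t)$, the limiting variance being $\sigma_W^2/(\alpha+\beta) = (\beta^2\sigma_{\alpha}^2 + \alpha^2\sigma_{\beta}^2 - 2\alpha\beta c)/(\alpha+\beta)^3$, as claimed. The statement for $\beta(t)$ is then immediate from the identity $\beta(t) - \beta t/(\alpha+\beta) = -\big(\alpha(t) - \alpha t/(\alpha+\beta)\big)$, which follows from $\alpha(t)+\beta(t)=t$, together with the symmetry of the Normal law.

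I expect the main obstacle to be the correct application of Anscombe's theorem, because the index $N(t)$ is built from the very same increments $D_i, U_i$ that define the summands $W_i$, so the index and the summands are strongly dependent. The point to stress is that Anscombe's theorem requires only $N(t)/t \to \theta$ in probability for a positive constant $\theta$ and imposes no independence between the index and the random walk; its proof rests on the uniform continuity in probability (the Anscombe condition) of finite--variance i.i.d.\ sums, which holds automatically here. A secondary technical point is the uniform control of the overshoot terms, for which the assumed finiteness of the second moments is exactly what is needed.
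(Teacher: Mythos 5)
Your proof is correct, but it follows a genuinely different route from the paper. The paper proceeds in the spirit of Tak\'acs: it starts from the explicit distribution formula of Proposition \ref{proposition distribution <tau}, sandwiches the resulting infinite sum via Lemma \ref{Lemma tak} (a generalization of Tak\'acs's comparison lemma to dependent $D,U$), replaces each term $\Pb(Y_n\leq \cdot\,,X_n<\cdot)$ by a bivariate normal probability through a multivariate CLT for $(Y_n,X_n)$, and finally evaluates the limiting sum as a Riemann integral of normal distribution functions, which collapses to $\Phi(x)$ after a convolution identity. You instead write $\alpha(t)=X_{N(t)}+\min\bigl(t-S_{N(t)},D_{N(t)+1}\bigr)$, isolate the fluctuations in the zero-mean i.i.d.\ random walk $W_i=(\beta D_i-\alpha U_i)/(\alpha+\beta)$ — which is exactly where the covariance $c$ enters — and invoke Anscombe's random-index CLT together with $N(t)/t\to 1/(\alpha+\beta)$ a.s.; your algebraic identity relating $X_{N(t)}-\alpha t/(\alpha+\beta)$ to $\sum_{i\le N(t)}W_i$ checks out, as does the variance computation. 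Your route is considerably shorter and makes the structure of the limiting variance transparent, at the price of importing Anscombe's theorem (whose validity without independence of index and summands you correctly flag as the crux); the paper's route is longer and more computational but stays self-contained given Proposition \ref{proposition distribution <tau}, which it has already established for its transient analysis. One small caveat in your write-up: the negligibility of the straddling-cycle terms should not be argued via convergence in distribution of the forward recurrence time to the equilibrium law, since that classical result requires a non-lattice assumption the theorem does not impose; instead, use the direct bound
\begin{equation*}
\Pb\bigl(D_{N(t)+1}+U_{N(t)+1}>\epsilon\sqrt{t}\,\bigr)\;\le\;\Pb\Bigl(N(t)>\tfrac{2t}{\alpha+\beta}\Bigr)+\frac{2t}{\alpha+\beta}\,\Pb\bigl(D+U>\epsilon\sqrt{t}\,\bigr),
\end{equation*}
where the last term vanishes because $\E[(D+U)^2]<\infty$ forces $x^2\,\Pb(D+U>x)\to0$. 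With that repair the argument is complete.
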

\begin{remark}{\em 
In Section \ref{sec: Applications} we compute $\alpha$, $\beta$, $\sigma_{\alpha}^2, \sigma_{\beta}^2$, and $c$ for the models introduced in Section \ref{sec: Applications 1}.}
\end{remark}

\subsection{Large Deviations Result}
\label{subsection LDP}
Where Thm.\ \ref{CLT theorem} concerns the behaviour of $\alpha(t)/t$ and $\beta(t)/t$ around their respective means, we now focus on their tail behavior. To this end, we study $\Pb(\alpha(t)/t> q )$ for $t$ large, with $q> {\alpha}/({\alpha+\beta})$. Our result makes use of the following objects:
\[\lambda(\theta) = \lim_{t\rightarrow\infty}\frac{1}{t}\log\E e^{\theta \alpha(t)},\:\:\:\:
\lambda^*(q) = \sup_{\theta\in \mathbb{R}}\big( \theta q - \lambda(\theta)\big),\]
usually referred to as the cumulant generating function and its Legendre-Fenchel transform, respectively. The following result shows that the large deviations of $\alpha(t)/t$ require the joint transform of $D$ and $U$ being available. It requires that the moment generating function ${\mathbb E}\, e^{\theta D}$ is finite for $\theta$ in a neighborhood of the origin. 

\begin{theorem}
\label{two equations theorem}
Assuming $\alpha, \beta < \infty$, if the cumulant generating function 
$\lambda(\theta)$ exists as an extended real number, then,
\begin{equation}
\label{large deviation} 
\lim_{t\rightarrow\infty}\frac{1}{t}\log\Pb\left(\frac{\alpha(t)}{t}>q\right) = -\lambda^*(q),\end{equation}
for any $q> {\alpha}/({\alpha+\beta})$.
The cumulant generating function $\lambda(\theta)$
equals $\theta d(\theta)$, where for a given $\theta$, $d(\theta)$ solves
\begin{equation}
\label{c(theta)}
\E e^{\theta (1-d(\theta))\,D\,-\,\theta \,d(\theta)\,U}=1.
\end{equation}
\end{theorem}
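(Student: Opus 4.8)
The plan is to decompose the argument into two parts: first I would extract a closed-form description of the scaled cumulant generating function $\lambda(\theta)$ from the double transform already established in Theorem~\ref{theorem on LST}, and then feed this $\lambda$ into the Gärtner--Ellis theorem \cite{dem} to obtain the large deviations limit. Since everything downstream rests on knowing $\lambda$, I would treat its identification first.

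To identify $\lambda(\theta)$, I would substitute $\theta\mapsto-\theta$ in \eqref{transform expression} to obtain the Laplace transform in $t$ of $\E e^{\theta\alpha(t)}$, namely
\[ G_\theta(q):=\int_0^\infty e^{-qt}\,\E e^{\theta\alpha(t)}\,\mathrm dt=\frac{1}{1-L_{1,2}(q-\theta,q)}\left[\frac{1-L_1(q-\theta)}{q-\theta}+\frac{L_1(q-\theta)-L_{1,2}(q-\theta,q)}{q}\right]. \]
Because $\alpha(t)\ge 0$ the integrand is nonnegative, so the abscissa of convergence of $G_\theta$ equals $\limsup_{t\to\infty}t^{-1}\log\E e^{\theta\alpha(t)}$; under the standing hypothesis that this limit exists, the abscissa of convergence is exactly $\lambda(\theta)$, and by a Pringsheim-type argument (the abscissa of convergence of the Laplace transform of a nonnegative function is a singularity lying on the real axis) it coincides with the rightmost real singularity of $G_\theta$. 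The term $(1-L_1(q-\theta))/(q-\theta)$ has only a removable singularity at $q=\theta$ since $L_1(0)=1$, while $L_1$ and $L_{1,2}$ are analytic up to the edge of their domains; hence the governing singularity is the largest real root of the denominator, $1-L_{1,2}(q-\theta,q)=0$, that is $\E e^{-(q-\theta)D-qU}=1$. Writing $q=\lambda(\theta)=\theta d(\theta)$ and substituting turns this into $\E e^{\theta(1-d(\theta))D-\theta d(\theta)U}=1$, which is precisely \eqref{c(theta)}; monotonicity of $q\mapsto\E e^{-(q-\theta)D-qU}$ (its $q$-derivative involves $-(D+U)$) guarantees the root, hence $d(\theta)$, is unique.

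With $\lambda$ in hand I would invoke Gärtner--Ellis. Convexity of $\lambda$ is automatic, as it is the pointwise limit of the convex maps $\theta\mapsto t^{-1}\log\E e^{\theta\alpha(t)}$. Smoothness and steepness on the interior of the effective domain I would obtain by applying the implicit function theorem to \eqref{c(theta)}, which exhibits $d(\theta)$, and thus $\lambda(\theta)=\theta d(\theta)$, as a differentiable function; a short computation differentiating $\E e^{\theta(1-d)D-\theta dU}=1$ at $\theta=0$ gives $\lambda'(0)=d(0)=\alpha/(\alpha+\beta)$, identifying the mean. Gärtner--Ellis then delivers the full large deviations principle for $\alpha(t)/t$ with good convex rate function $\lambda^*$.

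Finally, to extract \eqref{large deviation} I would apply the large deviations upper bound to the closed set $[q,\infty)$ and the lower bound to the open set $(q,\infty)$, obtaining $\limsup_t t^{-1}\log\Pb(\alpha(t)/t>q)\le-\inf_{x\ge q}\lambda^*(x)$ and $\liminf_t t^{-1}\log\Pb(\alpha(t)/t>q)\ge-\inf_{x>q}\lambda^*(x)$. Since $\lambda^*$ is convex, nonnegative, vanishes only at the mean $\alpha/(\alpha+\beta)$, and is continuous and strictly increasing to the right of it, both infima equal $\lambda^*(q)$ for every $q>\alpha/(\alpha+\beta)$, so the two bounds coincide and yield the claimed limit. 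I expect the principal obstacle to be the rigorous justification that the denominator root, rather than a singularity of $L_1$ or $L_{1,2}$ themselves, supplies the abscissa of convergence, together with verifying the steepness hypothesis of Gärtner--Ellis at the boundary of the effective domain of $\lambda$; both depend on carefully controlling the region of analyticity of the joint transform of $(D,U)$.
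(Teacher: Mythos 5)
Your proposal is correct in substance, but it takes a genuinely different route from the paper's own proof. The paper never analyzes the double transform: it introduces the auxiliary fluid process $Y(t)=\alpha(t)-dt$, which rises at rate $1-d$ during $D$-periods and falls at rate $d$ during $U$-periods, and computes the decay rate of $\Pb(\exists\, t>0: Y(t)>u)$ in two ways --- once by Cram\'er--Lundberg \cite{Asm} applied to the embedded random walk with steps $S_n=(1-d)D_n-dU_n$, which yields the root of $\E e^{\theta(1-d)D-\theta dU}=1$, and once via G\"artner--Ellis combined with the reduction to $\inf_{t>0}I(t)/t$ from \cite{Duffield,Gly}, which yields the root of $\lambda(\theta)=d\theta$ (plus a separate upper-bound argument to make this second characterization exact); equating the two characterizations gives (\ref{c(theta)}). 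You instead extract $\lambda(\theta)$ directly from Theorem \ref{theorem on LST}, as the rightmost real singularity --- a pole at the unique root $q^*$ of $L_{1,2}(q-\theta,q)=1$ --- of the Laplace transform of $t\mapsto\E e^{\theta\alpha(t)}$, which after writing $q^*=\theta d(\theta)$ is precisely (\ref{c(theta)}). Your route is more self-contained (it reuses the paper's own transform result rather than importing queueing asymptotics) and replaces the paper's probabilistic two-sided estimates by classical Laplace-transform theory; what it costs is the analytic care you flag, plus one verification you omit: that the pole at $q^*$ is not cancelled and really is the first singularity met. At $q^*$ the numerator of your $G_\theta$ equals $\bigl(1-L_1(q^*-\theta)\bigr)\,\theta/\bigl(q^*(q^*-\theta)\bigr)$, which is strictly positive because both factors are negative, so there is no cancellation; however, this presupposes $L_1(q^*-\theta)=\E e^{\theta(1-d(\theta))D}<\infty$, which does \emph{not} follow from finiteness of the joint transform at the root (strong positive dependence of $U$ on $D$ can keep the joint transform finite while the marginal one diverges) and must be imposed --- a Cram\'er-type condition of exactly the same nature as the one left implicit in the paper's Cram\'er--Lundberg step, so your proof is on an equal footing there. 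Finally, your steepness worry largely dissolves: since $0\le\alpha(t)\le t$ one has $\lambda(\theta)\le\max(0,\theta)$, so the effective domain of $\lambda$ is all of $\mathbb{R}$ and essential smoothness reduces to the differentiability that your implicit-function-theorem argument supplies wherever the root characterization holds.
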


\subsection{Spectrally positive L\'evy processes}
\label{sec: Applications}

In this section we compute the quantities needed in Thms.\ \ref{theorem on LST}, \ref{CLT theorem} and \ref{two equations theorem} for the models introduced in Section~\ref{sec: Applications 1}., with a focus on spectrally-positive L\'evy processes with negative drift. 
Before presenting the main analysis we recall some results on scale functions that we need in our analysis \cite{Kyp}. 

 \paragraph{Scale functions} When studying spectrally one-sided L\'evy processes results are often expressed in terms of so-called scale functions,  denoted by $W^{(q)}(\cdot)$ and $Z^{(q)}(\cdot)$, for $q\geq0$. \textcolor{black}{We will use the notation $W(\cdot), Z(\cdot)$ for the scale functions $W^{(0)}(\cdot)$ and $Z^{(0)}(\cdot)$}. Given a spectrally positive L\'evy process $X(\cdot)$ with Laplace exponent $\phi(\cdot)$, there exists an increasing and continuous function $W^{(q)}(\cdot)$ whose Laplace transform satisfies, for $x\geq0$, the equation
\[\int_0^\infty e^{-\theta x}W^{(q)}(x){\rm d}x = \frac{1}{\phi(\theta)-q} \hspace{5mm} \text{for } \theta>\psi(q),\]
with $\psi(\cdot)$ the inverse function of $\phi(\cdot)$. For $x<0$, we define $W^{(q)}(x)=0$.  In addition,
\[Z^{(q)}(x) = 1+q\int_0^x W^{(q)}(y){\rm d}y.\]
In most of the literature, the theory of scale functions is developed for spectrally negative L\'evy processes but similar results can be proven for spectrally positive L\'evy processes. 
We also define, for $\theta$ such that $\phi(\theta)<\infty$ and $q\geq\phi(\theta)$, the functions 
\begin{equation}
\label{exponential twist W}
W_{\theta}^{(q-\phi(\theta))}(x) = e^{-\theta x}W^{(q)}(x) \hspace{5mm} \text{and} \hspace{5mm} Z_{\theta}^{(q-\phi(\theta))}(x)=1+(q-\phi(\theta))\int_0^x W_{\theta}^{(q-\phi(\theta))}(y){\rm d}y.
\end{equation}
To formally define the functions $W_{\theta}^{(q-\phi(\theta))}(\cdot)$, $Z_{\theta}^{(q-\phi(\theta))}(\cdot)$ we need to perform an exponential change of measure; we refer to \cite{Avr, Kyp}  for further details. 
For the case $X(\cdot)$ is a spectrally positive (resp. spectrally negative) L\'evy process the scale function $W(\cdot)$ directly relates to the distribution function of the running maximum (resp.\ running minimum) of $X(\cdot)$. For a more extensive account of the theory of scale functions and exit problems for L\'evy processes we refer to \cite{Avr, Ber, Hub, Kuz, Kyp}.

\paragraph{Storage models}
\hspace{2mm} To apply Thms.\ \ref{theorem on LST}, \ref{CLT theorem} and \ref{two equations theorem} we need the joint transform of $U$ and $D$; given this transform we can compute the transform of the occupation time using (\ref{transform expression}), find the covariance $c=\COV(D,U)$ and the variances $\sigma_{\alpha}^2$, $\sigma_{\beta}^2$ needed in Thm.\ \ref{CLT theorem}, and solve  (\ref{c(theta)}). 

First we find representations for $D$ and $U$. Observe that $D$ is  distributed as the time between a downcrossing of level $\tau$ and the next upcrossing of it: with $\tau_{\alpha}$  as defined in (\ref{upcrossing}),
\begin{equation}
\label{D time}
D= \tau_{\alpha} \hspace{5mm} \text{\textcolor{black}{a.s}}.
\end{equation}
The time between an upcrossing and the next downcrossing, i.e.,  $U$,  is essentially a first exit time starting from a random point which is sampled from the overshoot distribution. Defining $\sigma_{x}^{(+)} = \inf_{t>0}\{t: Q(t) \le \tau\,|\, Q(0) = x\}$, we have
\begin{equation}
\label{U time}
U= \tau_{\beta} - \tau_{\alpha} =\sigma_{Q(\tau_{\alpha})}^{(+)} \hspace{5mm} \text{\textcolor{black}{a.s}}.
\end{equation}
The two distributional equalities (\ref{D time}) and (\ref{U time}) yield an expression for the covariance $c=\COV(D,U)$:
\[\COV(D,U) = \E[DU] - \E D\,\E U= \E \left[\tau_{\alpha}\, \sigma_{Q(\tau_{\alpha})}^{(+)}\right]- \E D\,\E U.\]

\begin{proposition}
\label{transforms}
(i)~The means $\alpha = \E D$ and $\beta = \E U$ equal
\begin{equation}
\label{mean time}
\E D = \frac{W(\tau)}{W'(\tau)},\:\:\:\E U = \frac{1}{W'(\tau)}\Big(\psi'(0)-W(\tau)\Big),
\end{equation}
(ii) The joint transform of $D$ and $U$ equals, with $p=\theta_1-\theta_2$,
\begin{align*}
\E\hspace{-1pt}e^{-\theta_1 D-\theta_2 U}\hspace{-1pt} &=\hspace{-1pt}\hspace{-1pt}\frac{W_{\psi(\theta_2)}^{(p)\hspace{2pt}'}(\tau)+(\psi(\theta_2)-p)W_{\psi(\theta_2)}^{(p)}(\tau)-\psi(\theta_2)Z_{\psi(\theta_2)}^{(p)}(\tau)}{W_{\psi(\theta_2)}^{(p)\hspace{2pt}'}(\tau)+\psi(\theta_2)W_{\psi(\theta_2)}^{(p)}(\tau)}.\numberthis\label{transformUD}
\end{align*}
(iii) The variances $\sigma_{\alpha}^2= \VAR D$ and $\sigma_{\beta}^2= \VAR U$ and covariance $c=\COV(D,U)$ equal
\begin{align*}
\sigma_{\alpha}^2 &= -2\frac{(W\star W)(\tau)}{W'(\tau)}+\frac{W(\tau)\left(2(W\star W)'(\tau)-W(\tau)\right)}{W'(\tau)^2}, \numberthis\label{variance D}\\
\sigma_{\beta}^2 &= \frac{2\psi'(0)}{W'(\tau)}\left(\int_ 0^{\tau}W(y){\rm d}y+\frac{W(\tau)}{W'(\tau)}\right)\hspace{-2pt}-\frac{1}{W'(\tau)}\hspace{-2pt}\left(\hspace{-3pt}\psi''(0)+2\psi'(0)^2\tau+\frac{\psi'(0)}{W'(\tau)}+\frac{W(\tau)^2}{W'(\tau)}\right) , \numberthis\label{variance U}\\
c &= \big(\psi'(0)-W(\tau)\big)\frac{1}{(W'(\tau))^2}\left( (W\star W)'(\tau) - W(\tau)\right)
+\frac{(W\star W)(\tau)}{W'(\tau)}-\frac{\psi'(0)}{W'(\tau)}\int_0^{\tau} W(x){\rm d}x  \numberthis\label{Covariance}.
\end{align*}
\end{proposition}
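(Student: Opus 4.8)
The natural route is to prove the joint transform (ii) first and then to read off the means in (i) and the (co)variances in (iii) by differentiating (ii) at the origin; I therefore describe the three parts in that logical order, even though they are stated the other way around.

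First I would combine the distributional identities \eqref{D time} and \eqref{U time} with the strong Markov property of $Q(\cdot)$ at the upcrossing epoch $\tau_{\alpha}$. Conditionally on $\mathcal{F}_{\tau_{\alpha}}$, the second sojourn $U$ is a first passage of the \emph{free} process below $\tau$ started from the overshoot $Q(\tau_{\alpha})>\tau$ (the reflection at $0$ is inactive until $\tau$ is reached from above); since $X(\cdot)$ is spectrally positive it has no downward jumps, so this passage is continuous and its transform is the classical $\E[e^{-\theta_2\sigma_x^{(+)}}]=e^{-\psi(\theta_2)(x-\tau)}$ for $x>\tau$. Hence
\[
\E e^{-\theta_1 D-\theta_2 U}=\E_{\tau}\big[e^{-\theta_1\tau_{\alpha}}\,e^{-\psi(\theta_2)(Q(\tau_{\alpha})-\tau)}\big],
\]
so everything reduces to the joint transform of the upcrossing time $\tau_{\alpha}$ and the overshoot $Q(\tau_{\alpha})-\tau$ of the reflected process started at $\tau$.

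To evaluate this I would introduce the $\theta_1$-resolvent measure $R^{(\theta_1)}(\tau,{\rm d}z)$ of $Q(\cdot)$ on $[0,\tau]$ killed at $\tau_{\alpha}$ and apply the compensation formula. On the storage-model paths (bounded variation, no Gaussian part) level $\tau$ is always crossed by an upward jump, so
\[
\E_{\tau}\big[e^{-\theta_1\tau_{\alpha}}e^{-\psi(\theta_2)(Q(\tau_{\alpha})-\tau)}\big]
=\int_{[0,\tau]}R^{(\theta_1)}(\tau,{\rm d}z)\int_{(\tau-z,\infty)}e^{-\psi(\theta_2)(z+u-\tau)}\,\Pi({\rm d}u).
\]
The resolvent of the spectrally positive process reflected at its infimum is expressible through $W^{(\theta_1)},Z^{(\theta_1)}$, with the reflection at $0$ producing the characteristic normalisation by $W^{(\theta_1)\prime}(\tau)$ in the denominator (as opposed to $W^{(\theta_1)}(\tau)$ for the absorbed two-sided exit). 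Carrying out the integral against $\Pi(\cdot)$, the factor $e^{-\psi(\theta_2)z}$ turns each $W^{(\theta_1)}$ into its exponential tilt $W_{\psi(\theta_2)}^{(p)}(\cdot)=e^{-\psi(\theta_2)\cdot}W^{(\theta_1)}(\cdot)$ with $p=\theta_1-\theta_2$ (note $\theta_1=p+\phi(\psi(\theta_2))$), and the running integral assembles into $Z_{\psi(\theta_2)}^{(p)}$, yielding exactly \eqref{transformUD}. Equivalently one may first perform the exponential change of measure with density $e^{-\psi(\theta_2)X(t)-\theta_2 t}$, under which the overshoot weight is absorbed and the tilted scale functions appear directly. \emph{This step is the main obstacle}: the subtlety is to handle the reflection at $0$ and the overshoot law dictated by $\Pi(\cdot)$ simultaneously.

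Given (ii), parts (i) and (iii) follow by differentiation at $\theta_1=\theta_2=0$. Setting $\theta_2=0$ (so $\psi(\theta_2)=0$, $p=\theta_1$) collapses \eqref{transformUD} to $\E e^{-\theta_1 D}=1-\theta_1 W^{(\theta_1)}(\tau)/W^{(\theta_1)\prime}(\tau)$, whence $\E D=W(\tau)/W'(\tau)$ and, after one more derivative, the expression \eqref{variance D} for $\sigma_{\alpha}^2$; the only non-elementary ingredient is the standard identity $\partial_q W^{(q)}(x)\big|_{q=0}=(W\star W)(x)$ (hence $\partial_q W^{(q)\prime}(x)|_{q=0}=(W\star W)'(x)$), coming from the series $W^{(q)}=\sum_{k\geq0}q^k W^{\star(k+1)}$. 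For $\E U$, $\sigma_{\beta}^2$ and $c=\COV(D,U)$ I would in addition differentiate in $\theta_2$, which brings in the expansion of $\psi$ near $0$, namely $\psi(0)=0$, $\psi'(0)=1/\phi'(0)$ and $\psi''(0)$; these generate the $\psi'(0)$, $\psi'(0)^2\tau$ and $\psi''(0)$ terms of \eqref{variance U} and \eqref{Covariance}. As a consistency check, $\E D+\E U=\psi'(0)/W'(\tau)$, so the ergodic fraction $\E D/(\E D+\E U)=W(\tau)/\psi'(0)$ agrees with the stationary law $\Pb(Q_{\infty}\leq\tau)=W(\tau)/\psi'(0)$ of the reflected process. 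The remaining computations in (iii) are lengthy but routine once the derivative identities above are in place.
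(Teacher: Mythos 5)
Your proposal is correct, and its skeleton coincides with the paper's own proof: the paper likewise reduces the problem, via (\ref{D time})--(\ref{U time}), the strong Markov property at $\tau_\alpha$, and conditioning on $(\tau_\alpha,Q(\tau_\alpha))$, to the identity $\E e^{-\theta_1 D-\theta_2 U}=\E e^{-\theta_1\tau_\alpha-\psi(\theta_2)(Q(\tau_\alpha)-\tau)}$ using the downward first-passage transform $\E e^{-\theta_2\sigma_s^{(+)}}=e^{-\psi(\theta_2)(s-\tau)}$ (adapted from the spectrally negative case of \cite{Pal}), and it likewise obtains (i) and (iii) by differentiating \eqref{transformUD} at the origin with the convolution identity $\partial_q W^{(q)}(x)=(W^{(q)}\star W^{(q)})(x)$, which it isolates as Lemma \ref{lemma conv W} and proves by uniqueness of Laplace transforms; your series argument $W^{(q)}=\sum_{k\geq0}q^kW^{\star(k+1)}$ yields the same identity and is equally valid. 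The one place you genuinely diverge is the middle step: where you propose to rebuild the joint transform of the upcrossing time and overshoot from the killed resolvent of $Q(\cdot)$ on $[0,\tau]$ plus the compensation formula, the paper simply quotes \cite[Thm.\ 1]{Avr}, which gives precisely this joint transform for the reflected process in the tilted scale-function form of \eqref{transformUD}. Your route is self-contained and makes transparent why integrating the overshoot against $\Pi({\rm d}u)$ produces the tilt $W^{(p)}_{\psi(\theta_2)}(x)=e^{-\psi(\theta_2)x}W^{(\theta_1)}(x)$ and assembles the $Z^{(p)}_{\psi(\theta_2)}$ term, but it amounts to re-proving the cited exit theorem, and the hard part (the explicit resolvent with the $W^{(\theta_1)\prime}(\tau)$ normalisation and the integration against $\Pi$) is exactly what your sketch leaves out; the citation buys the paper brevity at the cost of opacity. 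Your observation that upcrossings occur only by jumps on storage-model paths is the correct justification for the compensation-formula identity, and it matches the paper's scope: the proposition is proved for the bounded-variation storage case, with general spectrally positive processes handled separately by approximation in Theorem \ref{Theorem on occupation time SP}.
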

\begin{remark}{\em 
The arguments used can be adapted to the case that $D_1$ has a different distribution than $D_2, D_3,\ldots$. The only difference is that the first cycle $D_1,U_1$ has to be treated separately.} 
\end{remark}

\paragraph{General spectrally positive L\'evy process} As mentioned before, when $X(\cdot)$ is a general spectrally positive L\'evy process, with possibly paths of unbounded variation, then the reasoning presented above does not apply. The following theorem, however, shows that our result for the occupation time $\alpha(t)$ carries over to this case as well.

\begin{theorem}
\label{Theorem on occupation time SP}
Consider a spectrally positive L\'evy process  reflected at its infimum. With $A=[0,\tau]$ and $q,\theta\geq0$,
\begin{equation}
\label{OTSM}
\int_0^\infty e^{-q t}\E e^{-\theta \alpha(t)}{\rm d}t = \frac{1}{q}\frac{\psi(q)Z_{\psi(q)}^{(\theta)}(\tau)}{\theta W_{\psi(q)}^{(\theta)}(\tau)+\psi(q)Z_{\psi(q)}^{(\theta)}(\tau)}.
\end{equation}
\end{theorem}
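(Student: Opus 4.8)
The plan is to establish (\ref{OTSM}) first for the bounded-variation storage models of Section \ref{sec: Applications 1}, where $D$ and $U$ are genuine random variables and Proposition \ref{transforms} is available, and then to transfer the identity to an arbitrary spectrally positive L\'evy process by a bounded-variation approximation combined with a continuity argument. For the bounded-variation case the proof is purely computational: I would substitute the joint transform (\ref{transformUD}) and its specialisation $L_1(\theta)=L_{1,2}(\theta,0)$ into the general double-transform formula (\ref{transform expression}) of Theorem \ref{theorem on LST}, taking arguments $\theta_1=q+\theta$ and $\theta_2=q$, so that $p=\theta$ and $\psi(\theta_2)=\psi(q)$.

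Writing $\bar W=W_{\psi(q)}^{(\theta)}(\tau)$, $\bar W'=W_{\psi(q)}^{(\theta)\,'}(\tau)$ and $\bar Z=Z_{\psi(q)}^{(\theta)}(\tau)$, the first simplification I expect is
\[
1-L_{1,2}(q+\theta,q)=\frac{\theta\bar W+\psi(q)\bar Z}{\bar W'+\psi(q)\bar W},
\]
which already produces the denominator of (\ref{OTSM}). For $L_1(q+\theta)$ one uses $\psi(0)=0$, giving $L_1(q+\theta)=1-(q+\theta)W^{(q+\theta)}(\tau)/W^{(q+\theta)\,'}(\tau)$, and then the exponential-twist relation $W_{\psi(q)}^{(\theta)}(x)=e^{-\psi(q)x}W^{(q+\theta)}(x)$ from (\ref{exponential twist W}) (together with its derivative) to re-express everything through $\bar W,\bar W',\bar Z$. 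The bracket in (\ref{transform expression}) then telescopes: the first term contributes $\bar W/(\bar W'+\psi(q)\bar W)$ and the second contributes $(\psi(q)\bar Z-q\bar W)/\big(q(\bar W'+\psi(q)\bar W)\big)$, whose sum is $\psi(q)\bar Z/\big(q(\bar W'+\psi(q)\bar W)\big)$. Multiplying by $1/(1-L_{1,2}(q+\theta,q))$ cancels the factor $\bar W'+\psi(q)\bar W$ and yields exactly (\ref{OTSM}). As a sanity check, at $\theta=0$ this collapses to $1/q$ since $Z_{\psi(q)}^{(0)}\equiv1$.

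For a general spectrally positive L\'evy process $X(\cdot)$, possibly of unbounded variation, the idea is to approximate it by bounded-variation spectrally positive L\'evy processes $X^{(n)}(\cdot)$ whose Laplace exponents $\phi_n$ converge to $\phi$ pointwise: a Gaussian component of variance $\sigma^2$ is recovered in the limit of a steep negative drift $-\sigma^2 n$ compensated by positive jumps of size $1/n$ at rate $\sigma^2 n^2$, while infinite-activity jumps are dealt with by the standard small-jump truncation. For each $n$ the storage formula applies, so the double transform of $\alpha_n(t)$ is (\ref{OTSM}) with the scale functions of $X^{(n)}$. I would then pass to the limit on both sides: on the right using the known continuity of the scale functions $W^{(q)},Z^{(q)}$ (and their twists) with respect to convergence of the L\'evy triplet \cite{Kyp}, and on the left using that $\phi_n\to\phi$ forces $X^{(n)}\to X$ in the Skorokhod sense, hence $Q^{(n)}\to Q$ and $\alpha_n(e_q)\to\alpha(e_q)$, so that $\E e^{-\theta\alpha_n(e_q)}\to\E e^{-\theta\alpha(e_q)}$ by bounded convergence.

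The main obstacle is precisely this continuity step, where two points require care. First, the occupation time involves the discontinuous functional $s\mapsto 1_{\{Q(s)\le\tau\}}$, so convergence of $\alpha_n$ to $\alpha$ is not automatic from path convergence; I would argue that the set of times with $Q(s)=\tau$ has zero Lebesgue measure almost surely (a spectrally positive L\'evy process spends no time at a fixed level), which makes $\alpha(\cdot)$ almost surely continuous at the limiting path and legitimises the passage to the limit. Second, one must verify that the chosen approximating triplets genuinely yield convergence of the scale functions uniformly on $[0,\tau]$, and here I would invoke the continuity results for scale functions in \cite{Avr, Kyp}. Once these two facts are in place, (\ref{OTSM}) for the general case follows by taking limits.
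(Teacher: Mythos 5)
Your proposal is correct and follows essentially the same route as the paper: the bounded-variation (storage-model) case is settled by substituting Proposition \ref{transforms} (with $\theta_1=q+\theta$, $\theta_2=q$, hence $p=\theta$) into Theorem \ref{theorem on LST} and simplifying via the twist relation (\ref{exponential twist W}), and the general case is obtained by approximating with bounded-variation spectrally positive processes and passing to the limit on both sides of (\ref{OTSM}), using convergence of the reflected paths for the left-hand side and pointwise convergence of $\phi^{(n)},\psi^{(n)}$ and of the scale functions $W$, $Z$ for the right-hand side. If anything, your handling of the left-hand limit is more careful than the paper's, since you explicitly justify convergence of the occupation times through the a.s.\ Lebesgue-null occupation of the level $\tau$, whereas the paper appeals somewhat loosely to ``continuity of the indicator function'' and dominated convergence.
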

Letting the initial level $\tau\rightarrow \infty$, the effect of the reflection becomes negligible and the time the reflected process spends below the initial level $\tau$ converges to time the unreflected reflected process that started in the origin spends below 0.
Consequently, taking $\tau\to\infty$ in (\ref{OTSM}), we obtain the occupation time of the set $(-\infty,0]$ for the free process $X(\cdot)$ and recover a version of the Sparre Andersen's identity, see e.g. \cite[Lemma VI.15]{Ber}, \cite[Remark 4.1]{Lan}, or \cite{Iva}.

\begin{corollary}
\label{distributional equality}
Consider a spectrally positive L\'evy process; the occupation time of $(-\infty,0]$ has the Laplace transform
\[\int_0^\infty e^{-q t}\E e^{-\theta \alpha(t)}{\rm d}t = \frac{1}{q}\frac{\psi(q)}{\psi(\theta+q)}.\]
In addition, letting $e_q$ be an exponentially distributed random variable with mean $q^{-1}$,
\[\alpha(e_q) \stackrel {\rm d}{=} e_q - G_{e_q},\]
where $G_{e_q} = {\rm arg}\sup_{0\leq s \leq e_q}X(s)$ is the epoch at which the supremum is attained.
\end{corollary}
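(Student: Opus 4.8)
The plan is to derive both assertions by letting $\tau\to\infty$ in Theorem~\ref{Theorem on occupation time SP}, treating the analytic limit of the right-hand side and the probabilistic meaning of the left-hand side separately, and then matching the resulting transform against that of $e_q-G_{e_q}$. For the right-hand side I would first rewrite the tilted scale functions through the untilted ones: by (\ref{exponential twist W}) together with $\phi(\psi(q))=q$ one has $W_{\psi(q)}^{(\theta)}(\tau)=e^{-\psi(q)\tau}W^{(\theta+q)}(\tau)$ and $Z_{\psi(q)}^{(\theta)}(\tau)=1+\theta\int_0^\tau e^{-\psi(q)y}W^{(\theta+q)}(y)\,{\rm d}y$. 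I would then insert the classical large-argument asymptotics $W^{(p)}(x)\sim e^{\psi(p)x}/\phi'(\psi(p))$ as $x\to\infty$ (see \cite{Kyp}), with $p=\theta+q$. Writing $\Delta=\psi(\theta+q)-\psi(q)>0$, this gives $W_{\psi(q)}^{(\theta)}(\tau)\sim e^{\Delta\tau}/\phi'(\psi(\theta+q))$ and, after integrating the equivalent, $Z_{\psi(q)}^{(\theta)}(\tau)\sim\theta\,e^{\Delta\tau}/(\Delta\,\phi'(\psi(\theta+q)))$. Substituting these into the quotient in (\ref{OTSM}), the common factor cancels and, using $\Delta+\psi(q)=\psi(\theta+q)$, the right-hand side converges to $\tfrac1q\,\psi(q)/\psi(\theta+q)$ (the case $\theta=0$ being immediate).

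Next I would identify this limit probabilistically. Let $\alpha_\tau(\cdot)$ denote the occupation time of $[0,\tau]$ for the reflected process started at $\tau$, and $\alpha_\infty(\cdot)$ the occupation time of $(-\infty,0]$ for the free process $X(\cdot)$ started at $0$. Since the regulator equals $L(s)=\max\{0,\sup_{u\le s}(-X(u))-\tau\}$, on the event $\{\sup_{u\le t}(-X(u))\le\tau\}$ one has $L\equiv0$ on $[0,t]$, so that $Q(s)=\tau+X(s)$ and $\{Q(s)\le\tau\}=\{X(s)\le0\}$ for all $s\le t$. This event has probability tending to $1$ as $\tau\to\infty$ for each fixed $t$, whence $\alpha_\tau(t)\to\alpha_\infty(t)$ almost surely. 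As $0\le\alpha_\tau(t)\le t$, dominated convergence gives $\E e^{-\theta\alpha_\tau(t)}\to\E e^{-\theta\alpha_\infty(t)}$ for every $t$, and a second dominated-convergence step (with dominating function $e^{-qt}$) carries the limit through the time integral. Equating the two limits proves the transform formula; by (\ref{transform of alpha(t)}) this is exactly $\E e^{-\theta\alpha(e_q)}=\psi(q)/\psi(\theta+q)$.

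For the distributional identity I would show that $e_q-G_{e_q}$ has the same transform. By time reversal of the path over the independent horizon $e_q$, the post-supremum duration satisfies $e_q-G_{e_q}\stackrel{\rm d}{=}{\rm arg}\inf_{0\le s\le e_q}X(s)={\rm arg}\sup_{0\le s\le e_q}(-X(s))$. The process $-X$ is spectrally negative with Laplace exponent $\phi$ and right inverse $\psi$; as it creeps upward, its ascending ladder-height process is a pure drift and its first passage above level $t$ has transform $e^{-t\psi(\cdot)}$, so the ascending ladder-time exponent is $\kappa(\alpha)=\psi(\alpha)$. The Wiener--Hopf factorization (see \cite[Ch.~VI]{Ber}, \cite{Kyp}) then gives $\E e^{-\theta\,{\rm arg}\sup_{[0,e_q]}(-X)}=\kappa(q)/\kappa(q+\theta)=\psi(q)/\psi(\theta+q)$. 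Hence $\E e^{-\theta(e_q-G_{e_q})}=\psi(q)/\psi(\theta+q)=\E e^{-\theta\alpha(e_q)}$, and uniqueness of Laplace transforms yields $\alpha(e_q)\stackrel{\rm d}{=}e_q-G_{e_q}$, the announced Sparre--Andersen identity.

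The main obstacle is the interchange of $\tau\to\infty$ with the transform: the scale-function asymptotics settle the right-hand side routinely, but the rigor hinges on the \emph{almost sure} convergence $\alpha_\tau(t)\to\alpha_\infty(t)$—that is, on the fact that the reflecting barrier is not felt on a fixed finite horizon as $\tau\to\infty$—which is what legitimizes dominated convergence. The ladder-process computation in the last step is standard for spectrally one-sided processes, so the analytic passage to the limit is where the real care is required.
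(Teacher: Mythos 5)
Your proposal is correct and follows the same overall route as the paper: take $\tau\to\infty$ in Theorem \ref{Theorem on occupation time SP}, identify the limit of the right-hand side via scale-function asymptotics, and match it against the Wiener--Hopf factor of $e_q-G_{e_q}$, concluding by uniqueness of Laplace transforms. The differences lie in two technical ingredients. First, for the asymptotics of $Z_{\psi(q)}^{(\theta)}(\tau)$ the paper introduces the increasing function $U(x)=e^{-(\psi(q+\theta)-\psi(q))x}\bigl(Z_{\psi(q)}^{(\theta)}(x)-1\bigr)$, computes the Laplace transform of the measure $U({\rm d}x)$, and invokes Karamata's Tauberian theorem, whereas you integrate the asymptotic equivalent $W_{\psi(q)}^{(\theta)}(y)\sim e^{\Delta y}/\phi'(\psi(q+\theta))$ directly; since the integrand is positive with divergent integral this is legitimate and more elementary, avoiding the Tauberian machinery and the monotonicity check altogether, and it produces exactly the paper's limits (\ref{Tauber2}) and (\ref{Karamata}). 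Second, you make rigorous the convergence of the left-hand side, i.e.\ that the occupation time of $[0,\tau]$ for the reflected process started at $\tau$ converges to the occupation time of $(-\infty,0]$ for the free process: your pathwise observation that for fixed $t$ and $\tau>\sup_{u\leq t}(-X(u))$ the two occupation times coincide, followed by two applications of dominated convergence, fills in a step the paper only asserts informally in the prose preceding the corollary. Your ladder-process derivation of $\E e^{-\theta(e_q-G_{e_q})}=\psi(q)/\psi(q+\theta)$ via time reversal of $-X$ is the same Wiener--Hopf identity that the paper cites directly from the literature, so that step is equivalent in substance.
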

\begin{remark}
\textcolor{black}{{\em The first expression derived in Cor.\ \ref{distributional equality} when combined with \cite[Remark 3.2 (1)]{Gue} yields the following equality for the scale functions $Z_{\psi(q)}^{(\theta)}(\cdot)$ and $W^{(q+\theta)}(\cdot)$
\begin{equation}
\label{eq: scale function}
\int_0^{\infty}\left(\frac{\psi(q+\theta)-\psi(q)}{\theta}e^{\psi(q)x}Z_{\psi(q)}^{(\theta)}(x) - W^{(q+\theta)}(x)\right){\rm d}x=\frac{(\theta-q)\psi(q+\theta)+q\psi(q)}{\psi(q)}.
\end{equation}
}}
\end{remark}

\begin{remark}
{\em
The expressions derived in Prop.\ \ref{transforms} are closed-form expressions in terms of the $q$-scale functions $W^{(q)}(\cdot)$ and $Z^{(q)}(\cdot)$. For some classes the scale functions are explicitly available \cite{Hub, Nguyen}, but in general we only know them  through their Laplace transform. For instance, \cite{Aba, Rog, Sur} provide us with  computational techniques to numerically evaluate the scale functions for spectrally one-sided L\'evy processes.
}

\end{remark}

\paragraph{Reflected Brownian motion} Suppose $X(\cdot)$ is a Brownian motion with drift $\mu$ and variance $\sigma^2$. We have the following expressions for the scale functions:
\begin{equation}
\label{eq: scale Wqpsi}
W_{\psi(q)}^{(\theta)}(x) = \frac{2}{\sqrt{D(q+\theta)}} e^{-\frac{x}{\sigma^2} \sqrt{D(q)}}\text{sinh}\left(\frac{x}{\sigma^2}\sqrt{D(q+\theta)}\right),
\end{equation}
and 
\begin{align*}
Z_{\psi(q)}^{(\theta)}(x) = \frac{\theta\sigma^2}{\sqrt{D(q+\theta)}}&\Bigg(\frac{1}{\sqrt{D(q+\theta)}-\sqrt{D(q)}} e^{\frac{x}{\sigma^2} \left(\sqrt{D(q+\theta)}-\sqrt{D(q)}\right)} \\
&+ \frac{1}{\sqrt{D(q+\theta)}+\sqrt{D(q)}} e^{-\frac{x}{\sigma^2} \left(\sqrt{D(q+\theta)}+\sqrt{D(q)}\right)}\Bigg), \numberthis\label{eq: ZscaleBMpsi}
\end{align*}
where $D(z) = 2\sigma^2z+\mu^2$. An expression for the scale function $W^{(q)}(\cdot)$ can be found in \cite{Kuz}. The expressions in \eqref{eq: scale Wqpsi} and \eqref{eq: ZscaleBMpsi}  can be derived, after some standard calculus, from \eqref{exponential twist W}. Theorem \ref{Theorem on occupation time SP} yields an expression for the double transform of the occupation time of $[0,\tau]$, up to time $t$,  for a reflected Brownian motion, i.e., 
\begin{equation}
\label{OTRBM}
\int_0^\infty  e^{-qt}\E e^{- \theta \alpha(t)}{\rm d}t =  \frac{1}{q} \frac{\mathcal{L}_1(q,\theta,\tau)}{\mathcal{L}_1(q,\theta,\tau)+\mathcal{L}_2(q,\theta,\tau)} ,
\end{equation}
where 
\begin{equation}
\mathcal{L}_1(q,\theta,\tau):=\sigma^2\psi(q)\left(\sqrt{D(q+\theta)}~\text{cosh}\left(\frac{\sqrt{D(q+\theta)}}{\sigma^2}\tau\right)+\sqrt{D(q)}~\text{sinh}\left(\frac{\sqrt{D(q+\theta)}}{\sigma^2}\tau\right)\right)
\end{equation}
and 
\begin{equation}
\mathcal{L}_2(q,\theta,\tau):=2\sigma^2\theta~\text{sinh}\left(\frac{\sqrt{D(q+\theta)}}{\sigma^2}\tau\right).
\end{equation}
For $\mu=0$ and $\sigma^2=1$, (\ref{OTRBM}) agrees with \cite[Equation~(1.5.1), pp. 337]{Salm}.
 Letting $\tau\to\infty$ at the right hand side of (\ref{OTRBM}) and inverting the double transform we find the following expression for the density of the occupation time $\alpha(t)$ of the free process on the negative half line: for $s\in[0,t]$,
\begin{align*}
\Pb(\alpha(t)\in{\rm d}s) &= \left(\frac{1}{\pi}\frac{1}{\sqrt{s(t-s)}}e^{-\frac{\mu^2}{2\sigma^2}t}-\zeta(s,t)+\zeta(t-s,t)-2\frac{\mu^2}{\sigma^2}\Phi\left(\frac{\mu}{\sigma}\sqrt{t-s}\right)\Phi\left(-\frac{\mu}{\sigma}\sqrt{s}\right)\right){\rm d}s, \numberthis\label{density}
\end{align*}
with
\[\zeta(s,t):=\sqrt{\frac{2}{\pi}}\frac{\mu}{\sigma}\frac{1}{\sqrt{t-s}}\Phi\left(-\frac{\mu}{\sigma}\sqrt{s}\right)e^{-\frac{\mu^2}{2\sigma^2}(t-s)}.\]
This agrees with the expression established in \cite[Equation~(3)]{Pec}. 
\begin{remark}{\em 
For spectrally one-sided L\'evy processes with paths of unbounded variation we know that $W^{(q)}(0)=0$, see \cite[Lemma 3.1]{Kuz}. For $\tau=0$, (\ref{OTSM}) and (\ref{OTRBM}) give the double transform of the occupation time of $\{0\}$ which is equal to one, as expected due to the regularity of point 0. }
\end{remark}
\section{Proof of Theorem \ref{theorem on LST}}
\label{proof of LST}

\begin{proof}[\textbf{Proof of Theorem \ref{theorem on LST}}]
For the intuitive ideas behind the proof, see Section \ref{subsection transform}. We show that 
$
\E e^{-\theta \alpha(e_q)} = \mathcal{K}_1(\theta,q) + \mathcal{K}_2(\theta,q),
$
where 
\begin{equation}
\label{K1}
\mathcal{K}_1(\theta,q) = \frac{q}{q+\theta} \frac{1-L_1(q+\theta)}{1-L_{1,2}(q+\theta,q)},\:\:\: 
\mathcal{K}_2(\theta,q) = \frac{L_1(q+\theta)-L_{1,2}(q+\theta,q)}{1-L_{1,2}(q+\theta,q)}.
\end{equation}
Considering the three disjoint events $\{e_q< D\}$, $\{D\leq e_q< D+U\}$ and $\{e_q\geq D+U\}$,
evidently
\[
\E e^{-\theta \alpha(e_q)} =\E\left[ e^{-\theta \alpha(e_q)}1_{\{e_q<D\}}\right]+\E \left[ e^{-\theta \alpha(e_q)}1_{\{D\leq e_q<D+U\}}\right]+\E \left[ e^{-\theta \alpha(e_q)}1_{\{e_q\geq D+U\}}\right].\]
We work out the three terms above, which we call $I_1,I_2$ and $I_3$, separately. We obtain $I_1$ by conditioning on $e_q$ and $D$, with $F(\cdot)$ as before the distribution function of $D$,
\begin{equation*}
I_1= \int_{t=0}^\infty qe^{-qt}\int_{(t, \infty)} e^{-\theta t}\Pb(D\in{\rm d}s)\,{\rm d}t=\int_{t=0}^\infty q e^{-(q+\theta)t}(1-F(t)){\rm d}t=  \frac{q}{q+\theta}\left (1-L_1(q+\theta)\right). \numberthis \label{I1}
\end{equation*}
Similarly, for $I_2$ we obtain 
\begin{align*}
I_2& = \int_{t=0}^\infty qe^{-qt}\int_{(0,t)} e^{-\theta s}\Pb(D\in{\rm d}s)\int_{(t-s,\infty)} \Pb(U\in {\rm d}u\,|\,D = s){\rm d}t\\
&= \int_{t=0}^\infty qe^{-qt}\int_{(0,t)} e^{-\theta s}\Pb(D\in{\rm d}s) \Pb(U>t-s\,|\,D = s){\rm d}t\\
&=\int_{t=0}^\infty q e^{-qt} \int_{(0,t)} e^{-\theta s}\Pb(U>t-s, D\in {\rm d}s){\rm d}t\,=\,L_1(q+\theta)-L_{1,2}(q+\theta,q). \numberthis \label{I2}
\end{align*}
To identify $I_3$ we   use the regenerative nature of $X(\cdot)$, i.e., we use the fact  that $(D_{i+1}, U_{i+1})$ is independent of $(D_1,U_1,D_2, U_2,\ldots,D_i,U_i)$ in combination with the memoryless property of the exponential distribution. After $X(\cdot)$ leaves subset $B$ we sample the exponential clock again. This yields 
\begin{align*}
I_3&=  \left[\int_{t=0}^\infty q e^{-qt}\int_{(0,t)} e^{-\theta s}\Pb(D\in{\rm d}s)\int_{(0,t-s)} \Pb(U\in{\rm d}u\,|\,D = s)\,{\rm d}t\right]\cdot \E e^{-\theta \alpha(e_q)}\\
&=\left[\int_{t=0}^\infty q e^{-qt}\int_{(0,t)} e^{-\theta s}\Pb(U\leq t-s, D\in {\rm d}s)\,{\rm d}t\right]\cdot \E e^{-\theta \alpha(e_q)}\\
&=\left[\int_{(0,\infty)} e^{-(q+\theta)s}\int_{t=0}^\infty q e^{-qt}\Pb(U \le t, D\in {\rm d}s){\rm d}t\right]\cdot \E e^{-\theta \alpha(e_q)}=L_{1,2}(q+\theta,q)\cdot\E e^{-\theta \alpha(e_q)}. \numberthis\label{I3}
\end{align*}
Adding the expressions that we found for $I_1,I_2$ and $I_3$ we obtain $
\E e^{-\theta \alpha(e_q)} = \mathcal{K}_1(\theta,q) + \mathcal{K}_2(\theta,q),
$ as desired, with $\mathcal{K}_1(\theta,q), \mathcal{K}_2(\theta,q)$ defined in (\ref{K1}). 
\end{proof}

\section{Proof of Central Limit Theorem}
\label{Proof of CLT}
\begin{proof}[\textbf{Proof of Theorem \ref{CLT theorem}}]

Define $\alpha^\circ:=\alpha/(\alpha+\beta)$. Also,
\[T_i:= \sum_{k=1}^i(D_k+U_k),\]
and $N(t)$ the number of regeneration points until time $t\ge 0$:
\[N(t):= \max\{i\ge 0: T_i\le t\}.\]
Splitting $\alpha(t)$ into the contributions due to the regeneration cycles, we obtain
\begin{align*}\alpha(t)-\alpha^\circ t =&\:\sum_{k=1}^{N(t)} \Big((1-\alpha^\circ)D_i -\alpha^\circ U_i\Big)\,1_{\{N(t)>0\}}+\\
&\:\int_{T_{N(t)}}^t 
\Big(1_{\{X(s)\in A\}}- \alpha^\circ\Big){\rm d}s \,1_{\{N(t)>0\}}+ (\alpha(t)-\alpha^\circ t)\,1_{\{N(t)=0\}}.\end{align*}
Now divide the right-hand side  of the previous display by $\sqrt{t}$. 
The next step is to prove that the last two contributions can be safely ignored as $t$ grows large. We do so by showing that both term converge to $0$ in probability as $t\to\infty.$
\begin{itemize}
\item[$\circ$]
In the first place,
\[\Big|\, \frac{1}{\sqrt{t}}
\:\int_{T_{N(t)}}^t 
\Big(1_{\{X(s)\in A\}}- \alpha^\circ\Big){\rm d}s
\,\Big|\le \frac{2}{\sqrt{t}} \int_{T_{N(t)}}^t \,{\rm d}t \le 2\,\frac{t -T_{N(t)} }{\sqrt{t}} .\]
Now observe that, for any $\epsilon >0$, by the Markov inequality,
\begin{equation}
\label{NN}{\mathbb P}\left(\frac{t -T_{N(t)} }{\sqrt{t}} \ge \epsilon\right)\le \frac{{\mathbb E}(t -T_{N(t)})}{\sqrt{t\,\epsilon}} .\end{equation}
It is a standard result from renewal theory that the `undershoot' $t -T_{N(t)}$ converges in the sense that
\begin{align*}\lim_{t\to\infty} {\mathbb E}(t -T_{N(t)})=\: &\int_0^\infty \frac{1}{{\mathbb E}D+{\mathbb E}U}\int_x^\infty {\mathbb P}(D+U>y)\,{\rm d}y\,{\rm d}x\\
=\: &\int_0^\infty \frac{1}{{\mathbb E}D+{\mathbb E}U}\,y\, {\mathbb P}(D+U>y)\,{\rm d}y
;\end{align*}
see e.g. \cite[Section A1e]{AA}.
Now applying integration by parts, and recalling that (by Cauchy-Schwartz) $\sigma^2_\alpha<\infty$ and $\sigma^2_\beta<\infty$ imply that ${\mathbb V}{\rm ar}(D+U)<\infty$, we conclude that this expression is bounded from above by a positive constant.
As a consequence, for all positive $\epsilon$ the right-hand side of (\ref{NN}) vanishes as $t\to\infty$, and therefore the term under consideration converges to $0$ in probability as $t\to\infty.$
\item[$\circ$]
In addition, again applying the Markov inequality, noticing that $\{N(t) = 0\}=\{T_1>t\}$,
\begin{align*}
{\mathbb P}\left(\Big|\frac{\alpha(t)-\alpha^\circ t}{\sqrt{t}}1_{\{T_1>t\}}\Big|\ge \epsilon \right) \le&\:
{\mathbb P}\left(2\,1_{\{T_1>t\}}\ge \frac{\epsilon}{\sqrt{t}} \right) \le \frac{2\sqrt{t}}{\epsilon} {\mathbb P}(T_1>t).
\end{align*}
The rightmost expression in the previous display vanishes as $t\to\infty$ for all $\epsilon>0$, as a consequence of again the Markov inequality (more specifically: ${\mathbb P}(T_1>t) \le ({{\mathbb E}D+{\mathbb E}U})/{t}$). 
\end{itemize}
Summarizing the above, we conclude
\begin{equation}
\label{NNN}\lim_{t\to\infty}\frac{\alpha(t)-\alpha^\circ t}{\sqrt{t}} \stackrel{\rm d}{=} \lim_{t\to\infty}\frac{1}{\sqrt{t}}
\sum_{k=1}^{N(t)} \Big((1-\alpha^\circ)D_i -\alpha^\circ U_i\Big).\end{equation}
We proceed by finding, asymptotically matching, upper and lower bounds on
\[p_t(x):= {\mathbb P}\left(\frac{1}{\sqrt{t}}
\sum_{k=1}^{N(t)} \Big((1-\alpha^\circ)D_i -\alpha^\circ U_i\Big)<x\right).\]
We start with an upper bound. To this end, first observe that, with $m:=1/({\mathbb E}D+{\mathbb E}U)$, for $\delta>0$,
\begin{align}\label{UPPB}
p_t(x) &\le {\mathbb P}\left(\frac{1}{\sqrt{t}}
\sum_{k=1}^{N(t)} \Big((1-\alpha^\circ)D_i -\alpha^\circ U_i\Big)<x, \frac{N(t)}{t} \ge m(1-\delta)\right) +{\mathbb P}
\left(\frac{N(t)}{t} < m(1-\delta)\right).
\end{align}
The latter probability in the right-hand side of (\ref{UPPB}) vanishes as $t\to\infty$ due to the law of large numbers, whereas the former is majorized by
\[{\mathbb P}\left(\frac{1}{\sqrt{t}}
\sum_{k=1}^{mt(1-\delta)} \Big((1-\alpha^\circ)D_i -\alpha^\circ U_i\Big) <x\right);\]
with $s^2:={\mathbb V}{\rm ar}((1-\alpha^\circ)D -\alpha^\circ U)$,  by virtue of the central limit theorem we thus find that
\[\limsup_{t\to\infty} p_t(x)\le \Phi\left(\frac{x}{\sqrt{m(1-\delta)s^2}}\right).\]
We now consider a lower bound. Note that, using ${\mathbb P}(A\cap B) \ge {\mathbb P}(A)-{\mathbb P}(B^{\rm c})$,
\begin{align*}p_t(x)\ge&\:\, {\mathbb P}\left(\frac{1}{\sqrt{t}}
\sum_{k=1}^{N(t)} \Big((1-\alpha^\circ)D_i -\alpha^\circ U_i\Big)<x, \frac{N(t)}{t} \le m(1+\delta)\right)\\
\ge&\:\,{\mathbb P}\left(\frac{1}{\sqrt{t}}
\sum_{k=1}^{mt(1+\delta)} \Big((1-\alpha^\circ)D_i -\alpha^\circ U_i\Big)<x, \frac{N(t)}{t} \le m(1+\delta)\right)\\
\ge&\:\,{\mathbb P}\left(\frac{1}{\sqrt{t}}
\sum_{k=1}^{mt(1+\delta)} \Big((1-\alpha^\circ)D_i -\alpha^\circ U_i\Big)<x\right)-{\mathbb P}\left(\frac{N(t)}{t} > m(1+\delta)\right)
\end{align*}
The latter probability vanishing as $t\to\infty$ (again due to the law of large numbers), so that we can conclude that
\[\liminf_{t\to\infty} p_t(x)\ge \Phi\left(\frac{x}{\sqrt{m(1+\delta)s^2}}\right).\]
Now letting $\delta\downarrow 0$, upon combining the above, we arrive at
\[\lim_{t\to\infty}{\mathbb P}\left(\frac{\alpha(t)-\alpha^\circ t}{\sqrt{t}}\right) =\Phi\left(\frac{x}{\sqrt{m s^2}}\right).\]
Then observe that $s^2$ equals
\begin{align*}  (1-\alpha^\circ)^2\,{\mathbb V}{\rm ar}\,D -2(1-\alpha^\circ)
\alpha^\circ {\mathbb C}{\rm ov}(D,U) +(\alpha^\circ )^2\, {\mathbb V}{\rm ar}\,U=(1-\alpha^\circ)^2\,\sigma_\alpha^2 -2(1-\alpha^\circ)
\alpha^\circ \,c +(\alpha^\circ )^2\, \sigma_\beta^2.
\end{align*}
We have thus established that
\[\lim_{t\to\infty}\frac{\alpha(t)-\alpha^\circ t}{\sqrt{v\,t}} \stackrel{\rm d}{=} Z,\:\:\:\:\mbox{
with}\:\:\:\:
v:= \frac{\beta^2\sigma_\alpha^2+\alpha^2\sigma_\beta^2-2\alpha\beta c }{(\alpha+\beta)^3}.\]
This proves the stated. 
\end{proof}

\section{Proof of Large Deviations Result}
\label{Proof of LDP}
\begin{proof}[\textbf{Proof of Theorem \ref{two equations theorem}}]
Eqn.\ (\ref{large deviation}) is an immediate consequence of the G\"artner-Ellis theorem \cite{dem}, so that we are left with 
(\ref{c(theta)}). In the proof of (\ref{c(theta)}), the following auxiliary model is used. Consider the two sequences  $(D_n)_{n\in{\mathbb N}}$ and  $(U_n)_{n\in{\mathbb N}}$ defined in Section \ref{sec:Model description}, and define ${Y}(\cdot)$ as follows. 
During the time intervals $D_i$ the process ${Y}(\cdot)$ increases with rate $(1-d)$, whereas during the time interval $U_i$ it decreases with rate $d$, for some $0<d<1$; for now $d$ is just an arbitrary positive constant larger than ${\alpha}/({\alpha+\beta})$. Note the process $Y(\cdot)$ corresponds to a fluid model with drain rate $d$ and arrival rate $1$ during periods that the driving alternating renewal process is in a $D$ period. The occupation time $\alpha(t)$ then represents the amount of input during $[0,t]$. 
Hence, $Y(t) = \alpha(t)-dt.$

Consider the decay rate 
\[ \omega = -\lim_{u\to\infty}\frac{1}{u}\Pb(\exists t>0: Y(t) > u).\]
The idea is that we find two expressions for this decay rate; as it turns out, their equivalence proves (\ref{c(theta)}). Due to similarities with existing derivations, the proofs are kept succinct.
\begin{itemize}
\item[$\circ$] Observe that, with $S_n= (1-d)D_n - dU_n$,
\[\omega = -\lim_{u\to\infty}\frac{1}{u}\log\Pb\left(\exists n\in{\mathbb N}: \sum_{i=1}^n S_i > u\right).\]
The Cram\'er-Lundberg result \cite[Section XIII.5]{Asm} implies that $\omega$ solves $\E e^{\theta(1-d)D-\theta d U}=1$.
\item[$\circ$] Denote
\[I(a)= \sup_{\theta}(\theta a -\bar\lambda(\theta)),\:\:\:\mbox{where}\:\:\:
\bar\lambda(\theta)=\lim_{t\to\infty}\frac{1}{t}\log \E e^{\theta Y(t)} = \lambda(\theta) -d\theta.\]
Observe that
\[-\omega = \lim_{u\to\infty}\frac{1}{u}\log\Pb(\exists t>0: Y(tu) > u) \ge \sup_{t>0} \left(\lim_{u\to\infty}\frac{1}{u}\log\Pb(Y(tu) > u)\right).\]
Using `G\"artner-Ellis' \cite{dem}, we thus find the lower bound
\begin{equation}
\label{omega1}-\omega \ge -\inf_{t>0} t\,I\left(\frac{1}{t}\right) = -\inf_{t>0} \frac{I(t)}{t},\end{equation}
where $\inf_{t>0}  {I(t)}/{t}$ equals \cite{Duffield, Gly}  the solution of the equation $\bar\lambda(\theta)=0$, or, alternatively, $\lambda(\theta)=d\theta$ (which we call  $\theta^*$).

Now concentrate on the upper bound, to prove that (\ref{omega1}) holds with equality. First observe that, due to the fact that the slope $Y(\cdot)$ is contained in $[-1,1]$, for any $M>0$,
\begin{eqnarray*}
\Pb(\exists t>0: Y(t) > u) &\le& \Pb(\exists n\in{\mathbb N}: Y(n) > u-1) \\
&\le& 
\sum_{n=1}^{\lceil Mu\rceil } \Pb(Y(n) > u-1) + 
\sum_{n=\lceil Mu\rceil +1}^\infty \Pb(Y(n) > u-1)\\
&\le& 
\lceil Mu\rceil \max_{n\in\{1,\ldots,\lceil Mu\rceil \}} \Pb(Y(n) > u-1) + 
\sum_{n=\lceil Mu\rceil +1}^\infty \Pb(Y(n) > u-1).\end{eqnarray*}
Note that $u^{-1}\log \lceil Mu\rceil \to 0$ and 
\[
\max_{n\in\{1,\ldots,\lceil Mu\rceil\}} 
\Pb(Y(n) > u-1) \le  \sup_{t>0} 
\Pb(Y(t) > u-1)=  \sup_{t>0} 
\Pb(Y(tu) > u-1),\]
and hence\[
\limsup_{u\to\infty}\frac{1}{u}\log \max_{n\in\{1,\ldots,\lceil Mu\rceil\}} 
\Pb(Y(n) > u-1) \le -\inf_{t>0} t\,I\left(\frac{1}{t}\right) = -\inf_{t>0} \frac{I(t)}{t}=-  {\theta^*}.\]
Also, for $u\ge 1$ and $\theta>0$, by Markov's inequality,
\[\sum_{n=\lceil Mu\rceil +1}^\infty \Pb(Y(n) > u-1)\le \sum_{n=\lceil Mu\rceil +1}^\infty \Pb(Y(n) >0)\le 
 \sum_{n=\lceil Mu\rceil +1}^\infty \E e^{\theta Y(n)}.\]
 For all $\epsilon>0$ we have for $n$ large enough $\E e^{\theta Y(n)}\le e^{(\lambda(\theta)+\epsilon)n}.$ Choose $\theta$ such that $\lambda(\theta)<-2\epsilon$ (which is possible;  $\lambda(\cdot)$ is convex and decreases in the origin), so as to obtain
 \[\limsup_{u\to\infty}\frac{1}{u}\log \sum_{n=\lceil Mu\rceil +1}^\infty \Pb(Y(n) > u-1)\le -\epsilon M,\]
 which is for sufficiently large $M$ smaller than $-\theta^*$. Combining the above, we conclude by \cite[Lemma 1.2.15]{dem} that $-\omega\le -\theta^*$, and 
hence  (\ref{omega1}) holds with equality.
\end{itemize}
From the two approaches, we see that the solutions $(\theta,d)$ of 
\[\E e^{\theta(1-d)D-\theta d U}=1\:\:\:\:\mbox{and}\:\:\:\:\lambda(\theta)=d\theta\]
coincide; it is standard to verify that for fixed $d$ both equations have a unique positive solution $\theta$ (use the convexity of moment generating functions and cumulant generating functions, whereas their slopes in $0$ are negative due to the conditions imposed on $d$).
This concludes the proof.
\end{proof}

\section{Proofs of results on spectrally positive L\'evy processes}
\begin{proof}[\textbf{Proof of Theorem \ref{Theorem on occupation time SP}}]

We first prove the result for the case of a storage model, that is, for $t\geq0$, $X(t) = A(t) - rt$, where the arrival process $A(\cdot)$ is a pure jump subordinator. As the process $X(\cdot)$ is then of bounded variation, the random variables $D_i$ and $U_i$ are strictly positive a.s., see \cite{Cohen}. The result follows as an
 application of Thm.\ \ref{theorem on LST}. Using Proposition \ref{transforms} we see that, for $\theta,q\geq0$ 
\[L_{1,2}(q+\theta,q)  =1-\frac{\theta W_{\psi(q)}^{(\theta)}(\tau)+\psi(q)Z_{\psi(q)}^{(\theta)}(\tau)}{W_{\psi(q)}^{(\theta)'}(\tau)+\psi(q)W_{\psi(q)}^{(\theta)}(\tau)} \hspace{3mm}\text{and}\hspace{3mm} L_1(q+\theta)=1-\frac{(q+\theta)W^{(q+\theta)}(\tau)}{W^{(q+\theta)'}(\tau)}.\]
Using the expressions in (\ref{exponential twist W}) we obtain 
\[\frac{1-L_1(q+\theta)}{q+\theta}+\frac{L_1(q+\theta)-L_{1,2}(q+\theta,q)}{q} = \frac{1}{q}\frac{\psi(q)Z_{\psi(q)}^{(\theta)}(\tau)}{e^{-\psi(q)\tau}W^{(q+\theta)'}(\tau)}\]
and
\[\frac{1}{1-L_{1,2}(q+\theta,q)} = \frac{e^{-\psi(q)\tau}W^{(q+\theta)'}(\tau)}{\theta W_{\psi(q)}^{(\theta)}(\tau)+\psi(q)Z_{\psi(q)}^{(\theta)}(\tau)}.\]
Multiplying the two expressions we obtain the desired result.

Now consider the case that  $X(\cdot)$ is an arbitrary spectrally positive L\'evy process. We use a limiting argument based on an approximation procedure as discussed in \cite[p. 210]{Ber} and followed in e.g.~\cite{Kyp Ref}. 
Specifically, in this case we know \cite[p. 210]{Ber} that $X(\cdot)$, with $X(t) = A(t) - rt$ for $t\geq0$, is the limit of a sequence of L\'evy processes $\{X^{(n)}(\cdot)\}_n$ with no downward jumps and bounded variation. As $n$ goes to infinity $X^{(n)}(\cdot)$ converges to $X(\cdot)$ a.s.\ uniformly on compact time intervals. Since the process $X^{(n)}(\cdot)$ has bounded variation, it can be written in the form $X^{(n)}(t) = A^{(n)}(t) - r^{(n)}t$ with $A^{(n)}(\cdot)$  a pure jump subordinator; the superscript $n$ is used when referring to quantities related to the approximating sequence $X^{(n)}(\cdot)$.  By \cite[Lemma 13.5.1]{Whi}, the reflection operator is Lipschitz continuous with respect to the supremum norm, so that the reflected processes $Q^{(n)}(\cdot)$ converge a.s.\ uniformly on compact time intervals to the process $Q(\cdot)$. First we establish the convergence, as $n\to\infty$, of the left hand side of (\ref{OTSM}), i.e.,
\[\int_0^\infty e^{-qt}\E e^{\theta \alpha^{(n)}(t)}{\rm d}t \stackrel{n\rightarrow \infty}{\rightarrow} \int_0^\infty e^{-qt}\E e^{\theta \alpha(t)}{\rm d}t.\]
This convergence stems from the continuity of the indicator function and the Dominated Convergence Theorem. Concerning the right hand side of (\ref{OTSM}), we have to prove that, for $\theta, q\geq0$ and $\tau>0$, 
\begin{equation}
\label{convergence to prove}
\frac{\psi^{(n)}(q)Z_{\psi^{(n)}(q)}^{(\theta,n)}(\tau)}{\theta W_{\psi^{(n)}(q)}^{(\theta,n)}(\tau)+\psi^{(n)}(q)Z_{\psi^{(n)}(q)}^{(\theta,n)}(\tau)}\stackrel{n\rightarrow \infty}{\rightarrow}\frac{\psi(q)Z_{\psi(q)}^{(\theta)}(\tau)}{\theta W_{\psi(q)}^{(\theta)}(\tau)+\psi(q)Z_{\psi(q)}^{(\theta)}(\tau)}.
\end{equation}
Since $X^{(n)}(\cdot)\to X(\cdot)$ a.s.\ uniformly on compact intervals,  also the Laplace exponent $\phi^{(n)}(\cdot)$ converges pointwise to the Laplace exponent $\phi(\cdot)$ and thus, $\psi^{(n)}(\cdot)$ will also converge pointwise to $\psi(\cdot)$. Due to the Continuity Theorem for Laplace transforms, or using direclty \cite[Eqn.\ (3.2)]{Kyp Ref},  
\[W^{(\theta,n)}(x) \stackrel{n\rightarrow \infty}{\rightarrow} W^{(\theta)}(x) \hspace{5mm} \text{ for all} \hspace{2mm} x\geq0.\]
This yields, for all $x\geq0$, the convergence $W_{\psi^{(n)}(q)}^{(\theta,n)}(x) \rightarrow  W_{\psi(q)}^{(\theta)}(x)$ as $n\to\infty$.
Using the Bounded Convergence Theorem and \cite[Lemma 3.3]{Hub},
\[Z_{\psi^{(n)}(q)}^{(\theta,n)}(x) \stackrel{n\rightarrow \infty}{\rightarrow} Z_{\psi(q)}^{(\theta)}(x) \hspace{5mm} \text{ for all} \hspace{2mm} x\geq0.\]
These results prove the convergence in (\ref{convergence to prove}).
\end{proof}

\begin{proof}[\textbf{Proof of Corollary \ref{distributional equality}}]
From the Wiener-Hopf factorization we have that, for $\theta>0$ and an exponentially distributed random variables $e_q$,
\[\E e^{-\theta(e_q-G_{e_q})} = \frac{\bar{k}(q,0)}{\bar{k}(q+\theta,0)}=\frac{\psi(q)}{\psi(q+\theta)},\:\:\:\:\mbox{where }\:\:\:
\frac{\bar{k}(q,0)}{\bar{k}(q+\theta,0)} : = \E\left(e^{-\theta (e_q - G_{e_q})}\right);\]
the second equation  is due to explicit expression of the Wiener-Hopf factors for spectrally one-sided L\'evy processes, as can be found in e.g.\ \cite{Man,Kyp}. In what follows we prove that, for $\theta,q\geq0$,
\begin{equation}
\label{Tauber}
\lim_{\tau\rightarrow\infty} \frac{\psi(q)Z_{\psi(q)}^{(\theta)}(\tau)}{\theta W_{\psi(q)}^{(\theta)}(\tau)+\psi(q)Z_{\psi(q)}^{(\theta)}(\tau)}=\frac{\psi(q)}{\psi(q+\theta)}.
\end{equation}
In order to show (\ref{Tauber}) we extend \cite[Lemma 3.3]{Kuz} to the scale functions $W_{\psi(q)}^{(\theta)}(x)$ and $Z_{\psi(q)}^{(\theta)}(x)$. We have
\[W_{\psi(q)}^{(\theta)}(x) = e^{-\psi(q)x}W^{(q+\theta)}(x),\]
and by \cite[Lemma 3.3]{Kuz}  
\[\lim_{x\rightarrow\infty} e^{-\psi(q+\theta) x}W^{(q+\theta)}(x) = \frac{1}{\phi'(\psi(q+\theta))},\]
and hence
\begin{equation}
\label{Tauber2}
\lim_{x\rightarrow\infty} e^{-(\psi(q+\theta)-\psi(q))x}W_{\psi(q)}^{(\theta)}(x) = \frac{1}{\phi'(\psi(q+\theta))}.
\end{equation}
Next we study the  behavior of the scale function $Z_{\psi(q)}^{(\theta)}(x)$ for  $x$ large. Define the function $U(\cdot)$ by
\[U(x) = e^{-(\psi(q+\theta)-\psi(q))x}\left(Z_{\psi(q)}^{(\theta)}(x)-1\right).\] Using the definition of the $q$-scale function, as in \cite{Ber2} and \cite[Eqn.\ (3.5)]{Kuz}, it is straightforward to show that $U(\cdot)$ is an increasing function. Then the measure $U({\rm d}x)$ has Laplace transform, for $\lambda\geq0$,
\[\mathscr{L}U(\lambda)=\int_0^\infty e^{-\lambda x} U({\rm d}x) = \frac{\lambda\theta}{(\lambda+\psi(q+\theta)-\psi(q))(\phi(\lambda+\psi(q+\theta))-(q+\theta))}.\]
When letting $\lambda\downarrow0$ we find 
\[\lim_{\lambda\downarrow0}\mathscr{L}U(\lambda) = \frac{1}{\phi'(\psi(q+\theta))}\frac{\theta}{(\psi(q+\theta)-\psi(q))}.\]
Applying Karamata's Tauberian Theorem \cite[Thm.\ 1.7.1]{Bing},  
\begin{equation}
\label{Karamata}
 \lim_{x\rightarrow\infty}e^{-(\psi(q+\theta)-\psi(q))x}Z_{\psi(q)}^{(\theta)}(x) =\frac{1}{\phi'(\psi(q+\theta))}\frac{\theta}{(\psi(q+\theta)-\psi(q))} .
\end{equation}
Upon combining (\ref{Tauber2}) and (\ref{Karamata}),
\[\lim_{x\rightarrow\infty} \frac{\theta W_{\psi(q)}^{(\theta)}(x)}{\psi(q)Z_{\psi(q)}^{(\theta)}(x)} = \frac{\psi(q+\theta)-\psi(q)}{\psi(q)}\]
which after some straightforward algebra leads to (\ref{Tauber}). By uniqueness of the Laplace transform we find that $\alpha(e_q)$ and $e_q  - G_{e_q}$ are 
equal in distribution.
\end{proof}

\section*{Acknowledgements}

The research of N. Starreveld and M. Mandjes is partly funded by the NWO Gravitation project
N{\sc etworks}, grant number 024.002.003.

{\small
}

\appendix
\section{Appendix}
\label{sec: Additional results}

\begin{proof}[\textbf{Proof of Proposition \ref{corollary AF}}]
The proof is along the same lines as the proof in Section \ref{proof of LST}; we only need to condition on $X(\cdot)$ after an exponentially distributed amount of time. Given $X(0)\in A$, we have 
\begin{align*}
\E \left[e^{-\theta\alpha(e_q)}1_{\{X(e_q)\in A\}}\right] &= \E \left[e^{-\theta \alpha(e_q)}1_{\{e_q<D\}}\right]+\E\left[e^{-\theta\alpha(e_q)}1_{\{e_q\geq D+U\}}1_{\{X(e_q)\in A\}}\right]\\
&= \frac{q}{q+\theta}\left(1-\textcolor{black}{L_1(q+\theta,q)}\right)+ L_{1,2}(q+\theta,q)\cdot\E \left[e^{-\theta\alpha(e_q)}1_{\{X(e_q)\in A\}}\right]
\end{align*}
Hence,
\[\E \left[e^{-\theta\alpha(e_q)}1_{\{X(e_q)\in A\}}\right] =\frac{q}{q+\theta}\frac{1-L_1(q+\theta)}{1-L_{1,2}(q+\theta,q)}.\]
Substituting $\theta=0$,
\[\int_{0}^{\infty}e^{-qt}\Pb(X(t)\in A|X(0)\in A){\rm d}t =\frac{1}{q}\frac{1-L_1(q)}{1-L_{1,2}(q,q)}.\]
Similarly, to obtain (\ref{transform AF2}) we have, given $X(0)\in A$,
\begin{align*}
\E \left[e^{-\theta\alpha(e_q)}1_{\{X(e_q)\in B\}}\right] &= \E \left[e^{-\theta \alpha(e_q)}1_{\{D\leq e_q<D+U\}}\right]+\E\left[e^{-\theta\alpha(e_q)}1_{\{e_q\geq D+U\}}1_{\{X(e_q)\in B\}}\right]\\
&=L_1(q+\theta)-L_{1,2}(q+\theta,q) + L_{1,2}(q+\theta,q)\cdot\E \left[e^{-\theta\alpha(e_q)}1_{\{X(e_q)\in B\}}\right],
\end{align*}
yielding
\[\E \left[e^{-\theta\alpha(e_q)}1_{\{X(e_q)\in B\}}\right] =\frac{L_1(q+\theta)-L_{1,2}(q+\theta,q)}{1-L_{1,2}(q+\theta,q)}.\]
Substituting $\theta=0$, we obtain (\ref{transform AF2}).
\end{proof}

\begin{lemma}
\label{lemma conv W}
The $q$-scale function $W^{(q)}(x)$ satisfies 
$\partial_q W^{(q)}(x) = (W^{(q)}\star W^{(q)})(x).$
\end{lemma}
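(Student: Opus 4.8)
The plan is to pass to the Laplace transform in the spatial variable $x$ and invoke its injectivity, where throughout $\star$ denotes the convolution on $[0,\infty)$, i.e.\ $(f\star g)(x)=\int_0^x f(y)g(x-y)\,{\rm d}y$. I would work from the defining relation $\int_0^\infty e^{-\theta x}W^{(q)}(x)\,{\rm d}x=(\phi(\theta)-q)^{-1}$, valid for $\theta>\psi(q)$. Since the Laplace transform maps convolution to multiplication, the transform of the right-hand side of the claimed identity is immediately $\int_0^\infty e^{-\theta x}(W^{(q)}\star W^{(q)})(x)\,{\rm d}x=(\phi(\theta)-q)^{-2}$.

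For the left-hand side I would differentiate the defining relation with respect to $q$: since $\partial_q(\phi(\theta)-q)^{-1}=(\phi(\theta)-q)^{-2}$, interchanging $\partial_q$ with the integral gives $\int_0^\infty e^{-\theta x}\,\partial_q W^{(q)}(x)\,{\rm d}x=(\phi(\theta)-q)^{-2}$. The two transforms then agree for all $\theta>\psi(q)$, an interval that is more than enough for uniqueness. As $W^{(q)}$ is continuous, so are both $\partial_q W^{(q)}$ and its self-convolution $W^{(q)}\star W^{(q)}$, whence injectivity of the Laplace transform on continuous functions yields $\partial_q W^{(q)}(x)=(W^{(q)}\star W^{(q)})(x)$ for every $x\geq0$.

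The one delicate point, and the main obstacle, is justifying the interchange of $\partial_q$ and $\int_0^\infty$ (and, implicitly, that $q\mapsto W^{(q)}(x)$ is differentiable at all). I would resolve this via the analytic series representation $W^{(q)}(x)=\sum_{k\geq0}q^k\,W^{\star(k+1)}(x)$, where $W=W^{(0)}$ and $W^{\star(k+1)}$ is the $(k+1)$-fold convolution of $W$ with itself. Bounding $W$ by a constant $C$ on a compact interval gives $W^{\star(k+1)}(x)\leq C^{k+1}x^k/k!$ there, so both the series and its termwise $q$-derivative converge locally uniformly; term-by-term differentiation is therefore legitimate and licenses the interchange above.

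In fact this representation gives a self-contained proof bypassing the transform argument. Differentiating term by term yields $\partial_q W^{(q)}(x)=\sum_{j\geq0}(j+1)q^{j}W^{\star(j+2)}(x)$. On the other hand, squaring the series under convolution and collecting the coefficient of $q^{j}$ — there are exactly $j+1$ index pairs $(k,l)$ with $k,l\geq0$ and $k+l=j$, each contributing $W^{\star(j+2)}$ — produces $(W^{(q)}\star W^{(q)})(x)=\sum_{j\geq0}(j+1)q^{j}W^{\star(j+2)}(x)$. The two power series coincide, which is exactly the assertion.
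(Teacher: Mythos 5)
Your transform argument coincides exactly with the paper's proof: the paper differentiates the defining relation $\int_0^\infty e^{-\theta x}W^{(q)}(x)\,{\rm d}x=(\phi(\theta)-q)^{-1}$ with respect to $q$, computes the transform of $(W^{(q)}\star W^{(q)})(x)$ as $(\phi(\theta)-q)^{-2}$, and concludes by uniqueness of the Laplace transform. What you add, however, goes genuinely beyond the paper. First, the paper differentiates under the integral sign with no justification whatsoever; your series representation $W^{(q)}(x)=\sum_{k\geq0}q^kW^{\star(k+1)}(x)$, with the inductive bound $W^{\star(k+1)}(x)\leq C^{k+1}x^k/k!$ on compacts, is the standard device (cf.\ \cite{Kuz}) for establishing analyticity of $q\mapsto W^{(q)}(x)$. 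One refinement: local uniform convergence only controls compact $x$-intervals, so for the interchange over all of $[0,\infty)$ you should invoke Tonelli (all terms are nonnegative for $q\geq0$), which yields $\int_0^\infty e^{-\theta x}\partial_q W^{(q)}(x)\,{\rm d}x=\sum_{j\geq0}(j+1)q^j\phi(\theta)^{-(j+2)}=(\phi(\theta)-q)^{-2}$ directly for $\theta>\psi(q)$. Second, your closing computation — matching the coefficient $(j+1)W^{\star(j+2)}(x)$ of $q^j$ on both sides — is a complete, self-contained proof in its own right: it is purely algebraic, needs only the behaviour of $W$ on $[0,x]$ for each fixed $x$, and sidesteps both the differentiation-under-the-integral issue and the appeal to Laplace-transform uniqueness. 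So the proposal is correct; its first half is the paper's argument made honest, and its second half is a more elementary alternative that the paper does not contain.
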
 
\begin{proof}[\textbf{Proof of Lemma \ref{lemma conv W}}]
By definition of the $q$-scale function and the results of \cite[Section 3.3]{Kuz}, we have that 
\[\int_0^\infty e^{-\theta x}W^{(q)}(x){\rm d}x =\frac{1}{\phi(\theta)-q}=\int_0^\infty \sum_{k=0}^\infty e^{-\theta x}  q^k W^{\star (k+1)}(x) \,{\rm d}x.\] 
Therefore,
\[\lim_{h\downarrow 0} \int_0^\infty e^{-\theta x} \Big(\frac{W^{(q+h)}(x)-W^{(q)})(x)}{h}\Big) \,{\rm d}x=\lim_{h\downarrow 0} \int_0^\infty e^{-\theta x} \sum_{k=0}^\infty \frac{(q+h)^k -q^k}{h}W^{\star (k+1)}(x) \,{\rm d}x,\]
where $W^{\star k}(\cdot)$ is the $k$-fold convolution of $W^{(0)}(\cdot).$ Because of the convexity of $x\mapsto x^k$, for any $k\ge 1$,
\[k\,q^{k-1}\le \frac{(q+h)^k - q^k}{h} \le k(q+h)^{k-1} .\]
Again using  \cite[Section 3.3]{Kuz}, it is a matter of calculus to verify that, 
\[\int_0^\infty e^{-\theta x} \sum_{k=0}^\infty k\,q^{k-1}W^{\star (k+1)}(x) \,{\rm d}x=\frac{1}{(\phi(\theta)-q)^2},\]
whereas for $h\in (0, \phi(\theta)-q)$,
\[\int_0^\infty e^{-\theta x} \sum_{k=0}^\infty k(q+h)^{k-1}W^{\star (k+1)}(x) \,{\rm d}x=\frac{1}{(\phi(\theta)-q-h)^2},\]
which converges to $(\phi(\theta)-q)^{-2}$ as $h\downarrow 0.$

Computing the Laplace transform of the convolution $(W^{(q)}\star W^{(q)})(x)$, we also find 
\[\int_0^\infty e^{-\theta x}(W^{(q)}\star W^{(q)})(x){\rm d}x =  \frac{1}{(\phi(\theta)-q)^2}.\]
Combining the above, the result follows by the uniqueness of the Laplace transform. 
\end{proof}

\begin{proof}[\textbf{Proof of Proposition \ref{transforms}}]

We first prove  (\ref{transformUD}). We start by showing that 
\begin{equation}
\label{transform equation}
\E e^{-\theta_1 D -\theta_2 U} = \E e^{-\theta_1\tau_\alpha -\psi(\theta_2)(Q(\tau_{\alpha})-\tau)}.
\end{equation}
Due to (\ref{D time}) and (\ref{U time}),
$\E \text{exp}(-\theta_1 D -\theta_2 U)= \E \text{exp} (-\theta_1\tau_\alpha-\theta_2\sigma_{Q(\tau_\alpha)}^{(+)}).$
Conditioning on the value of the first exit time from $[0,\tau]$, i.e., $\tau_\alpha$, and the overshoot over level $\tau$ at that time, i.e. $Q(\tau_\alpha)$, we find
\begin{align*}
\E e^{-\theta_1\tau_\alpha-\theta_2\sigma_{Q(\tau_\alpha)}^{(+)}}&= \int_{(0,\infty)}\int_{(\tau,\infty)} \E e^{-\theta_1 t-\theta_2 \sigma_{s}^{(+)}}\Pb\left(Q(t)\in{\rm d}s,\tau_\alpha\in{\rm d}t\right)\\
&= \int_{(0,\infty)} e^{-\theta_1 t}\int_{(\tau,\infty)} \E e^{-\theta_2 \sigma_{s}^{(+)}}\Pb\left(Q(t)\in{\rm d}s,\tau_\alpha \in{\rm d}t\right) \numberthis \label{exceed above}.
\end{align*}
Now we use the transform of the first-exit time $\sigma_{s}^{(+)}$ established in \cite[Eqn.\ (3)]{Pal}; note that in 
\cite{Pal} this result has been derived for a spectrally negative L\'evy process but a similar argument yields the result for the spectrally positive case. For a spectrally positive L\'evy process with Laplace exponent $\phi(\theta)$ we consider the exponential martingale $\mathcal{E}_t(c) = e^{-cX_t-\phi(c)t}$ and then the result follows by invoking the same arguments as in the spectrally negative case. Omitting a series of mechanical steps, this eventually yields
\begin{equation}
\label{transform above}
\E e^{-\theta_2\sigma_{s}^{(+)}} = e^{-\psi(\theta_2)(s-\tau)}.
\end{equation}
Substituting (\ref{transform above}) into (\ref{exceed above}) we obtain the right hand side of (\ref{transform equation}), as desired. 

For the right hand side of (\ref{transformUD}) we use \cite[Thm.\ 1]{Avr}, which provides the joint transforms of the first-exit time and exit position from $[0,\tau]$. The expressions in (\ref{mean time}) are obtained in the usual way: differentiating (\ref{transformUD}) and inserting 0. An expression for $\E[DU]$ is found similarly; during these computations we need Lemma \ref{lemma conv W}. A similar reasoning applies for (\ref{variance D}), (\ref{variance U}), and  (\ref{Covariance}). \end{proof}

\end{document}